\title{Control theory approach to continuous-time finite state mean field games}
\author{Yurii Averboukh\footnote{Higher School of Economics, Moscow, Russia, \email{averboukh@gmail.com}}}
\date{}
\begin{document}
\maketitle

\begin{abstract}
In the paper, we use the equivalent formulation of a finite state mean field game as a control problem with mixed constraints to study the dependence of  solutions to  finite state mean field game on  an initial distribution of players. We introduce the concept of  value multifunction of the mean field game that is a mapping assigning to an initial time and an initial distribution a set of expected outcomes of the representative player corresponding to solutions of the mean field game. Using the control reformulation of the finite state mean field game, we give the sufficient condition on a given multifunction to be a value multifunction in the terms of the viability theory. The maximal multifunction (i.e. the mapping assigning to an initial time and distribution the whole set of values corresponding to solutions of the mean field game) is characterized via the backward attainability set for the certain control systems.
\msccode{49N80, 91A16, 
	49J45, 60J27, 34H05	
}
\keywords{continuous-time finite state mean field game, value multifunction, viability, backward attainability domain, master equation}
\end{abstract}

\section{Introduction}
The mean field game theory examines  systems of identical players those interacts via some external media. It was proposed independently by Lasry, Lions~\cite{Lasry_Lions_2006_I},~\cite{Lasry_Lions_2006_II} and Huang, Caines, Malham\'{e} ~\cite{Huang_Caines_Malhame_2007},~\cite{Huang_Malhame_Caines_2006}. In the paper, we study the continuous-time finite state mean field games that is the infinite  player dynamic game under assumptions that the players are similar, when the dynamics of each players is given by a continuous-time finite state Markov chain with transition probabilities depending on distribution of all players and player's actions.   The finite state mean field games find various application in the analysis of socio-econimic systems and modeling of cybersecurity~\cite{Kolokoltsov_security},~\cite{Kolokoltsov_Malofeev_2},~\cite{Kolokoltsov_Yang},~\cite{Kolokoltsov_Bensoussan},~\cite{Kolokoltsov_Malofeev_book}. 

The study of the continuous-time finite state mean field games started with papers~\cite{Basna_Hilbert_Kolokoltsov},~\cite{Gomes_Mohr_Souza_finite_state}. In those papers the approach based on the mean field game system consisting of Bellman  and kynetic equations was developed. Notice that, for the   finite state mean field game, the mean field game system is a system of ODEs with mixed (initial and boundary) conditions. The results of the aforementioned papers were extended to the case of finite state mean field games with common noise acting in a finite sequence of times in~\cite{finite_state_common_noise}.  

The further progress of the theory of finite state mean field games is due to the probabilistic approach~\cite{Cecchin_Fischer_probabilistic_finite_state} and master equation ~\cite{Bayraktar_Ceccin_et_al_common_noise},~\cite{Bayraktar_et_al_Wright_N_player},~\cite{Bayraktar_Cohen_master},~\cite{master_Cechin_Pelino}. The master equation first proposed by Lions~\cite{lions_lecture} for the second-order mean field games is used to justify the convergence of feedback equilibria in the finite player games to the solution of the mean field game and to study the dependence of the solution of the mean field game on the initial distribution of players. However, the theory of  master equation for the mean filed games is developed in the case of unique solution to the mean field game~\cite{Bayraktar_et_al_Wright_N_player},  ~\cite{Bayraktar_Cohen_master},~\cite{master_Cechin_Pelino} or in the presence of the common noise~\cite{Bayraktar_Ceccin_et_al_common_noise}. 

In the paper, we study the dependence of  solutions of the continuous-time finite state mean field game in the general case without any uniqueness assumptions. To this end, we express the  mean field game as a finite-dimensional control problem with mixed constraints and  use the concept  of  value multifunction that is a set-valued mapping assigning to an initial time and an initial distribution of players a set of expected outcomes of the representative player. The maximal value multifunction comprises all solutions of the mean field game. We find the sufficient condition on a given multifunction to be a value multifunction in the terms of the viability theory. Furthermore, we characterize the maximal value function using the attainability domain for the certain dynamical system in the backward time constructed by the original mean filed game. Finally, we show that a graph of smooth solution of the master equation is a value multifunction.

The paper is organized as follows. In Section~\ref{sect:finite_state_mfg}, we introduce the main concepts of the continuous-time finite state mean field games. Section~\ref{sect:relaxation} is concerned with the relaxation of the control space. The control problem equivalent to the finite state mean field game is introduced in Section~\ref{sect:optimal_control}. The study of dependence of the solution of the finite state mean field game on the initial distribution is studied in Section~\ref{sect:dependence}. Here we introduce the mean field game dynamical system and show that a multifunction satisfying the viability condition for this dynamical system is a value mutifunction. Furthermore, we prove that the maximal value multifunction can be found using the attainability domain for the certain dynamical system. The last result of Section \ref{sect:dependence} gives the link between the  solution of master equation and the value multifunction.  

\section{Problem setting}\label{sect:finite_state_mfg} In this section, we discuss the mean field game system for the finite state case and deduce the master equation in the multivalued form.  Since we do not impose condition ensuring the uniqueness of the optimal control of the representative player,  the strong analysis of the finite state  mean filed  involves the relaxation of the control space which is introduced in Section \ref{sect:relaxation}.
\subsection{Mean field game system for finite state space}\label{subsect:mfg_system}
Without loss of generality, we assume the state space is $\{1,\ldots,d\}$, where $d\in \mathbb{N}$. Distributions on the space $\{1,\ldots,d\}$ lie in the $d$-dimensional simplex $$\Sigma^d\triangleq\{(m_1,\ldots,m_d):\,m_i\geq 0,\ \ m_1+\ldots+m_d=1\}.$$  In the following, we regard $m\in\Sigma^d$ as a row-vector and endow $\Sigma^d$ with the Euclidean metric. Additionally, we embed $\Sigma^d$ into the Euclidian space of row-vectors denoted below by $\mathbb{R}^{d*}$. 

Recall that  mean field games examine  systems of infinitely many similar players. In the finite state case (see \cite{Basna_Hilbert_Kolokoltsov}, \cite{Gomes_Mohr_Souza_finite_state}), the dynamics of each player is given by the Markov chain with the Kolmogorov matrix depending on the current distribution of agents 
$$Q(t,m,u)=(Q_{i,j}(t,m,u))_{i,j=1}^d. $$
Here, $t$ stands for a time, $m$ is a distribution of all player, $u$ is a control of the player chosen from some set $U$. We consider the mean field game on the finite time horizon $[t_0,T]$ and assume that each player tries to maximize 
\begin{equation}\label{intro:object_f}
\mathbb{E}\left[\sigma(X(T),m(T))+\int_{t_0}^T g(t,X(t),m(t),u(t))dt\right],
\end{equation}
where $X$ is the stochastic process describing the motion of the player, $m(t)$ is the distribution of players at time $t$, whereas $u(t)$ denotes the instantaneous control of the player.

We impose the following conditions on the control space, the Kolmogorov matrix and the payoff functions.
\begin{enumerate}[label=(C\arabic*)]
	\item The set $U$ is a metric compact.
	\item The functions $Q_{i,j}$, $\sigma$ and $g$ are continuous.
	\item The dependence of $Q_{i,j}$ and $g$ on $m$ is Lipschitz continuous uniformly w.r.t. $t$ and $u$.
\end{enumerate}

The solution of the mean field game is given by the common solution of two problems. The first one describes the dynamics of the distribution of players, when the second problem is the optimization problem for the representative player. To introduce them, let us describe the strategies used by the players. 

We assume that the players are informed about current time and state. Since the state space is finite, one can assume that, a policy is now a $d$-tuple $u(\cdot)=(u_i(\cdot))_{i=1}^d$, where $u_i(\cdot)$, $i=1,\ldots,d$, are measurable functions from $[t_0,T]$ to $U$. Below, we assume that the player who occupies the state $i$ at time $t$ uses the control $u_i(t)$. Therefore, the dynamics of distribution of players obeys the following Kolmogorov equation:
\begin{equation}\label{eq:system_KOlmogorov}
\frac{d}{dt}m_j(t)=\sum_{i=1}^{d}m_i(t)Q_{i,j}(t,m(t),u_i(t)),\ \ j=1,\ldots,d,\ \ m(t_0)=m_0.
\end{equation} It is convenient to rewrite this system in the vector form. To this end, given a vector of controls $u=(u_i)_{i=1}^d\in U^d$ and $m\in \Sigma^d$,  denote the matrix with the elements $Q_{i,j}(t,m,u_i)$ by $\mathcal{Q}(t,m,u)$. Notice that now $U^d$ serves as a new control space. Therefore, system~(\ref{eq:system_KOlmogorov}) can be rewritten as follows:
\begin{equation}\label{eq:motion_usual_strategies}
\frac{d}{dt}m(t)=m(t)\mathcal{Q}(t,m(t),u(t)),\ \ m(t_0)=m_0.
\end{equation}

The second problem is the optimization problem for each player. Given a flow of probabilities $m(\cdot)$, we obtain the Markov decision problem with the dynamics determined by the Kolmogorov matrix
$$Q(t,m(t),u) $$ and objective function~(\ref{intro:object_f}). Recall that we use the feedback strategies. Within this framework, the solution of the optimal control problem can be obtained  from the  dynamic programming. To introduce it for the Markov decision problem, we define the Hamiltonian by the following rule: for $t\in [0,T]$, $m\in\Sigma^d$, $\phi\in \rd$, set
\begin{equation}\label{intro:H_i}
H_i(t,m,\phi)\triangleq \max_{u\in U}\left[\sum_{j=1}^dQ_{i,j}(t,m,u)\phi_j+g(t,i,m,u)\right], 
\end{equation}
$$H(t,m,u)=(H_i(t,m,u))_{i=1}^d. $$ 
For any flow of probabilities is $m(\cdot)$, the value function $\phi:[0,T]\rightarrow\rd$ satisfies the Bellman equation
\begin{equation}\label{eq:Zachrison_usual}
\frac{d}{dt}\phi(t)=-H(t,m(t),\phi(t)),\ \ \phi_i(T)=\sigma(i,T), 
\end{equation}
whereas the feedback optimal strategy is computed by the rule
\begin{equation}\label{incl:u_optimal}
\hat{u}_i(t)\in \underset{u\in U}{\operatorname{Argmax}}\left[\sum_{j=1}^dQ_{i,j}(t,m,u)\phi_j(t)+g(t,i,m(t),u)\right]. 
\end{equation} A solution of the system~(\ref{eq:motion_usual_strategies}), ~(\ref{eq:Zachrison_usual}),~(\ref{incl:u_optimal}) is  a solution of the finite state mean field game. 

\subsection{Master equation in the multivalued form}\label{subsect:master_eq}
	To study the dependence of the solution of the mean field game on the initial distribution of players, Lions proposed the concept of the master equation~\cite{lions_lecture}. The equivalence between the classical formulation and master equation for the second order case is shown in \cite{Master_cdll}. Moreover, there the master equation technique was used to deduce the convergence of solutions of $N$-player stochastic differential games to the solution of the mean field game. These results were extended to the finite state mean field games in~\cite{Bayraktar_Cohen_master},~\cite{master_Cechin_Pelino}. The assumptions of the aforementioned works include the uniqueness of the control maximizing the expression of the Hamiltonian (i.e. the Hamiltonian is assumed to be strictly convex). In the paper, we do not impose this condition. Thus, it is reasonable to introduce the master equation  in the multivalued setting. In Proposition \ref{prop:value_to_master} below, we derive this equation under assumption that there exists a smooth value function of the mean field game. Proposition \ref{prop:master} in Section \ref{sect:dependence} states that each smooth solution of the master equation in the multivalued form provides the solution of the mean field game system.
	
	The main object of the master equation is the  function $\Phi:[0,T]\times\Sigma^d\rightarrow\rd$ such that the quantity  $\Phi_i(s,\mu)$ can be interpreted as the optimal outcome of the representative player who starts at the time $s$ from the state~$i$ under assumptions that the initial distribution of agents is $\mu$ and all players use an optimal strategy realizing a solution of the MFG.  Below, $\partial\Phi/\partial s$ denotes the derivative of $\Phi$ w.r.t. time variable, while ${\partial \Phi}/{\partial \mu}$ stands for the derivative of $\Phi$ w.r.t. measure, i.e., given $s\in [0,T]$ and $\mu\in\Sigma^d$, ${\partial \Phi}/{\partial \mu}(s,\mu)$ is a $d\times d$ matrix such that, for every $\mu'\in\Sigma^d$,
	\begin{equation}\label{intro:derivative_Phi_measure}\Phi(s,\mu')-\Phi(s,\mu)=(\mu'-\mu)\frac{\partial\Phi}{\partial\mu}(s,\mu)+o(\|\mu'-\mu\|).\end{equation}
	
	To introduce the master equation, for $\phi\in \rd$,  put
	\[\begin{split}
	\mathcal{O}(t,m,\phi)\triangleq \Bigl\{m\mathcal{Q}&(t,m,u):u=(u_i)_i^d,\\ &\sum_{j=1}^dQ_{i,j}(t,m,u_i)\phi_j(t)+g(t,i,m(t),u_i)=H_i(t,m,\phi)\Bigr\}.\end{split}\]

	\begin{proposition}\label{prop:value_to_master}
		Let $\Phi:[0,T]\times\Sigma^d\rightarrow\rd$ be a continuously differentiable function such that, for every $t_0\in [0,T]$, $m_0\in\Sigma^d$, there exists a solution to mean field game system~(\ref{eq:motion_usual_strategies}),~(\ref{eq:Zachrison_usual}),~(\ref{incl:u_optimal}) $(\phi(\cdot),m(\cdot))$ such that $\Phi(t,m(t))=\phi(t)$ for $t\in [t_0,T]$ and $m(t_0)=m_0$. Then,
		$\Phi$ satisfies the master equation in the multivalued form
		\begin{equation}\label{eq:master_usual}
		\frac{\partial}{\partial s}\Phi(s,\mu)+H(t,\mu,\Phi(s,\mu))\in - \mathcal{O}(s,\mu,\Phi(s,\mu))\frac{\partial \Phi}{\partial \mu}(s,\mu).
		\end{equation}     
	\end{proposition}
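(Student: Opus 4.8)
The plan is to differentiate the identity $\Phi(t,m(t))=\phi(t)$ along a solution of the mean field game system and to extract the master equation from the Bellman equation~(\ref{eq:Zachrison_usual}). Fix $(t_0,m_0)\in[0,T]\times\Sigma^d$ and let $(\phi(\cdot),m(\cdot))$ be the solution supplied by the hypothesis, so that $m(t_0)=m_0$ and $\Phi(t,m(t))=\phi(t)$ for $t\in[t_0,T]$. Since the right-hand side of~(\ref{eq:motion_usual_strategies}) is bounded, $m(\cdot)$ is Lipschitz; since each $H_i$ is continuous (a maximum of continuous functions over the compact $U$), the Bellman equation~(\ref{eq:Zachrison_usual}) shows that $\phi(\cdot)\in C^1$. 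As $\Phi\in C^1$, the composition $t\mapsto\Phi(t,m(t))$ is absolutely continuous, and, because $m(\cdot)$ takes values in $\Sigma^d$ with $\dot m(t)=m(t)\mathcal{Q}(t,m(t),u(t))$ tangent to $\Sigma^d$, the definition~(\ref{intro:derivative_Phi_measure}) yields the chain rule for a.e. $t$.

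First I would substitute the two ODEs into the chain rule. For a.e. $t\in[t_0,T]$,
\begin{equation*}
\frac{\partial\Phi}{\partial s}(t,m(t))+\dot m(t)\frac{\partial\Phi}{\partial\mu}(t,m(t))=\frac{d}{dt}\Phi(t,m(t))=\dot\phi(t)=-H(t,m(t),\Phi(t,m(t))),
\end{equation*}
the last equality being~(\ref{eq:Zachrison_usual}) together with $\phi=\Phi(\cdot,m(\cdot))$. Rearranging gives
\begin{equation*}
\frac{\partial\Phi}{\partial s}(t,m(t))+H(t,m(t),\Phi(t,m(t)))=-\dot m(t)\frac{\partial\Phi}{\partial\mu}(t,m(t)).
\end{equation*}
Because the control $u(\cdot)$ realizing this solution obeys the optimality inclusion~(\ref{incl:u_optimal}), the velocity $\dot m(t)=m(t)\mathcal{Q}(t,m(t),u(t))$ belongs to $\mathcal{O}(t,m(t),\Phi(t,m(t)))$ for a.e. $t$, by the very definition of $\mathcal{O}$. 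Hence the left-hand side above lies in $-\mathcal{O}(t,m(t),\Phi(t,m(t)))\frac{\partial\Phi}{\partial\mu}(t,m(t))$ for a.e. $t$.

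The remaining, and main, difficulty is to pass from ``a.e. $t$'' to the single instant $t=t_0$: since $u(\cdot)$ is only measurable, $\dot m(t_0)$ need not exist and one cannot evaluate the identity at $t_0$ directly. I would argue by a compactness/limit procedure. Set $A(t):=\frac{\partial\Phi}{\partial s}(t,m(t))+H(t,m(t),\Phi(t,m(t)))$, which is continuous in $t$. Choose $t_n\downarrow t_0$ along which $\dot m(t_n)$ exists and the previous identity holds; the velocities $\dot m(t_n)$ are uniformly bounded, so, passing to a subsequence, $\dot m(t_n)\to w$ for some $w\in\mathbb{R}^{d*}$, and continuity gives $A(t_0)=\lim_n A(t_n)=-w\frac{\partial\Phi}{\partial\mu}(t_0,m_0)$. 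The concluding ingredient is the closed-graph (upper semicontinuity) property of the correspondence $\mathcal{O}$, which follows from the continuity of $Q_{i,j}$, $g$ and $H_i$ and the compactness of $U$ via Berge's maximum theorem applied to the argmax sets defining $\mathcal{O}$; it yields $w\in\mathcal{O}(t_0,m_0,\Phi(t_0,m_0))$. Crucially, I use the individual values $\dot m(t_n)$ rather than the difference quotients $(m(t_n)-m_0)/(t_n-t_0)$, so no convexity of $\mathcal{O}$ is required. Therefore
\begin{equation*}
\frac{\partial\Phi}{\partial s}(t_0,m_0)+H(t_0,m_0,\Phi(t_0,m_0))=-w\frac{\partial\Phi}{\partial\mu}(t_0,m_0)\in-\mathcal{O}(t_0,m_0,\Phi(t_0,m_0))\frac{\partial\Phi}{\partial\mu}(t_0,m_0),
\end{equation*}
and, as $(t_0,m_0)$ was arbitrary, this is exactly the master equation~(\ref{eq:master_usual}). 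The only obstacle beyond the formal chain-rule computation is this passage to $t=t_0$ together with the closed-graph property of $\mathcal{O}$.
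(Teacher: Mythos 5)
Your proposal is correct and takes essentially the same route as the paper's proof: differentiate $t\mapsto\Phi(t,m(t))$ along the solution, use the Bellman equation~(\ref{eq:Zachrison_usual}) to replace $\dot\phi$, and observe that the optimality inclusion~(\ref{incl:u_optimal}) forces $\dot m(t)\in\mathcal{O}(t,m(t),\Phi(t,m(t)))$. The only difference is that where the paper dispatches the final step with the phrase ``plugging in the resulting inclusion only for the initial position $(s,\mu)$'', you supply the justification that step actually requires --- a sequence $t_n\downarrow t_0$ of points where the a.e.\ identity and the inclusion hold, boundedness of the velocities $\dot m(t_n)$, continuity of the remaining terms, and the closed-graph property of $\mathcal{O}$ via Berge's theorem, with the apt remark that working with velocities rather than difference quotients avoids any convexity assumption on $\mathcal{O}$ --- which is a gain in rigor over the paper's one-line conclusion rather than a different method.
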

	\begin{proof}
	By assumption the function $t\mapsto\Phi(t,m(t))$  solves Bellman equation~(\ref{eq:Zachrison_usual}), when $m(\cdot)$ satisfies~(\ref{eq:system_KOlmogorov}) for the strategy $\hat{u}$ obeying~(\ref{incl:u_optimal}). The later assumption implies the inclusion
	\begin{equation}\label{incl:m_O}
	\frac{d}{dt}m(t)\in\mathcal{O}(t,m(t),\Phi(t,m(t))).
	\end{equation} Differentiating  $\Phi(t,m(t))$, from the Bellman equation and the definition of the derivative of the function $\Phi$ w.r.t. measure (see (\ref{intro:derivative_Phi_measure})), we obtain
	\[\frac{\partial \Phi(t,m(t))}{\partial t}+H(t,m(t),\Phi(t,m(t)))+\frac{dm(t)}{dt}\cdot \frac{\partial\Phi(t,m(t))}{\partial \mu}=0.\] Using this,~(\ref{incl:m_O}) and plugging in the  resulting inclusion only for the initial position $(s,\mu)$, we arrive at  master equation in the multivalued form~(\ref{eq:master_usual}).
\end{proof}
	
	\begin{remark}If, for each $t\in [0,T]$, $m\in\Sigma^d$ and $\phi\in\rd$, there exists a unique $\bar{u}=(\bar{u}_i)_{i=1}^d$ such that
	\[\begin{split}
	\sum_{j=1}^dQ_{i,j}&(t,m,u_i)\phi_j+g(t,i,m(t),u_i)\\=&\sum_{j=1}^dQ_{i,j}(t,m,\bar{u}_i)\phi_j+g(t,i,m(t),\bar{u}_i),
	\end{split}\] the Hamiltonian is differentiable w.r.t. $\phi$, and master equation (\ref{eq:master_usual}) takes the more familiar single-valued form 
	\[\frac{\partial}{\partial s}\Phi(s,\mu)+H(t,\mu,\Phi(s,\mu)) +\frac{\partial H(t,\mu,\phi)}{\partial \phi}\frac{\partial \Phi}{\partial \mu}(s,\mu)=0.\]\end{remark}

\section{Relaxation of the finite state mean field game}\label{sect:relaxation}

In the previous section, we assumed that  players  occupying the same state use the same control. However, in the case when the Hamiltonian is not strictly convex, the optimal control is nonunique. Furthermore, the set of feedback strategies is neither convex nor closed. To overcome this difficulties, we use the relaxed (randomized) feedback strategies~\cite{Hermandez_Lerma_Guo}.

A  feedback strategy is  a mapping $$t\mapsto \nu(t)\triangleq (\nu_1(t,du),\ldots,\nu_d(t,du)) $$ such that
\begin{itemize}
	\item $\nu_i(t,\cdot)$ is a probability on $U$;
	\item the dependence $t\mapsto \int_U \zeta(u)\nu_i(t,du)$ is measurable for any continuous function $\zeta:U\rightarrow \mathbb{R}$.  
\end{itemize}

The relaxed strategies imply that the players occupying the state $i$ at the time $t$ distribute their controls according to the probability $\nu_i(t,du)$. 

Thus, for a fixed flow of probabilities $m(\cdot)$ and a  relaxed strategy $\nu(\cdot)$, the motion of a representative player is the Markov chain with the Kolmogorov matrix
$$\left(\int_U Q(t,m(t),u)\nu_i(t,du)\right)_{i,j=1}^d. $$ With some abuse of notation, for $t\in [0,T]$, $m\in\Sigma^d$, $\nu=(\nu_1,\ldots,\nu_d)\in (\mathcal{P}(U))^d$,  we set
$$\mathcal{Q}_{i,j}(t,m,\nu)\triangleq \left(\int_U Q_{i,j}(t,m,u)\nu_i(du)\right), $$
$$g_i(t,m,\nu)=g(t,i,m,\nu)\triangleq \int_{U}g(t,i,m,u)\nu_i(du). $$ Here and below $\mathcal{P}(U)$ stands for the set of all Borel probabilities on $U$.

Furthermore, notice that
$$H_i(t,m,\phi)=\max_{\nu_i\in\mathcal{P}(U)}\left[\sum_{j=1}^q \int_U Q(t,m,u)\nu_i(du)\cdot \phi_j+g_i(t,m,\nu_i)\right]. $$ Finally, we introduce the matrix and vector notation:
\[\mathcal{Q}(t,m,\nu)=(\mathcal{Q}_{i,j}(t,m,\nu))_{i,j=1}^d.\]
$$H(t,m,\phi)\triangleq (H_1(t,m,\phi),\ldots,H_d(t,m,\phi))^T,$$ $$g(t,m,\nu)=(g_1(t,m,\nu),\ldots,g_d(t,m,\nu))^T, $$ $$\sigma(m)\triangleq (\sigma(1,m),\ldots,\sigma(d,m))^T.$$

Using this notation, we give the following definition.

\begin{definition}\label{def:control_process} Let $t_0\in [0,T]$ be an initial time, $m_0\in\Sigma^d$ be an initial distribution of players. We say that a pair $(\phi(\cdot),m(\cdot))$ is a solution of the mean field game if there exists a  relaxed feedback strategy $\hat{\nu}(\cdot)=(\hat{\nu}_1(\cdot),\ldots,\hat{\nu}_d(\cdot))$ such that
	\begin{enumerate}
		\item $m(\cdot)$ satisfies the Kolmogorov equation
		\begin{equation}\label{eq:motion_relaxed_star}
		\frac{d}{dt}m(t)=m(t)\mathcal{Q}(t,m(t),\hat{\nu}(t)),\ \ m(t_0)=m_0.
		\end{equation}
		\item $\phi(\cdot)$ satisfies  Bellman equation~(\ref{eq:Zachrison_usual}), i.e.,
		\begin{equation}\label{eq:HJ_def}
		\frac{d}{dt}\phi(t)=-H(t,m(t),\phi(t)),\ \ \phi(T)=\sigma(m(T)) 
		\end{equation}
		\item $\hat{\nu}$ is an optimal control, i.e., for a.e. $t\in [t_0,T]$,
		\begin{equation}\label{incl:hat_nu}
		\hat{\nu}_i(t) \in \underset{\nu_i\in \mathcal{P}(U)}{\operatorname{Argmax}}\left[\sum_{j=1}^q \int_U Q(t,m,u)\nu_i(du)\cdot \phi_j+\int_Ug(t,i,m,u)\nu_i(du)\right] 
		\end{equation} or, equivalently,
		$$\frac{d}{dt}\phi(t)+\mathcal{Q}(t,m(t),\hat{\nu}(t))\phi(t)+g(t,m(t),\hat{\nu}(t))=0. $$
	\end{enumerate}
	
\end{definition}

\begin{theorem}\label{th:existence}
	Under assumptions (C1)--(C3), there exists at least one solution of the mean field game.
\end{theorem}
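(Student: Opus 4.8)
The plan is to prove existence via a fixed-point argument on the flow of player distributions. The core difficulty is that the mean field game couples a forward Kolmogorov equation for $m(\cdot)$ with a backward Bellman equation for $\phi(\cdot)$, linked through the optimality condition~(\ref{incl:hat_nu}) which, in the absence of strict convexity, is genuinely set-valued. I would set up the problem as a search for a fixed point of the best-response map on the space of continuous flows $m:[t_0,T]\to\Sigma^d$ and apply the Kakutani--Fan--Glicksberg set-valued fixed-point theorem, since single-valued Brouwer/Schauder theory is unavailable when the argmax is not a singleton.

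First I would fix the function space: let $\mathcal{M}$ be the set of continuous functions $m(\cdot):[t_0,T]\to\Sigma^d$ with $m(t_0)=m_0$, equipped with the uniform norm. By assumptions (C1)--(C3) the Kolmogorov matrix entries $Q_{i,j}$ are bounded and Lipschitz in $m$, so all such flows are uniformly Lipschitz in $t$ with a common constant; hence $\mathcal{M}$ is a convex, compact (by Arzel\`a--Ascoli) subset of $C([t_0,T];\mathbb{R}^{d*})$. Next, given $m(\cdot)\in\mathcal{M}$, I would solve the backward Bellman equation~(\ref{eq:HJ_def}); here the Hamiltonian $H(t,m,\phi)$ is the pointwise maximum in~(\ref{intro:H_i}), so by Danskin-type arguments and compactness of $U$ it is Lipschitz in $\phi$ (and continuous in $t,m$), giving a unique solution $\phi[m](\cdot)$ that depends continuously on $m(\cdot)$. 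This is the step where relaxation is essential: working over $\mathcal{P}(U)$ makes the set of optimal responses~(\ref{incl:hat_nu}) nonempty, convex, and compact-valued, which is exactly what the fixed-point theorem requires.

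Then I would build the best-response multifunction. Given $m(\cdot)$ and the associated $\phi[m](\cdot)$, define for a.e.\ $t$ the set of optimal relaxed controls via~(\ref{incl:hat_nu}); any measurable selection $\hat\nu(\cdot)$ produces, through the Kolmogorov equation~(\ref{eq:motion_relaxed_star}), a new flow $\tilde m(\cdot)$. Let $\mathcal{R}(m)\subset\mathcal{M}$ be the set of all flows $\tilde m$ arising from all measurable optimal selections. The key properties to verify are: (i) $\mathcal{R}(m)$ is nonempty (existence of a measurable selection from the argmax, via a measurable selection theorem), (ii) $\mathcal{R}(m)$ is convex --- this requires that the set of attainable velocities $m\mathcal{Q}(t,m,\nu)$ over optimal $\nu$ is convex, which holds because $\nu\mapsto\mathcal{Q}(t,m,\nu)$ is affine and the constraint in~(\ref{incl:hat_nu}) is linear in $\nu_i$, so the velocity set is a convex compact subset of $\mathbb{R}^{d*}$, and (iii) $\mathcal{R}$ has closed graph. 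A fixed point $m^*\in\mathcal{R}(m^*)$, together with $\phi[m^*]$ and the selecting $\hat\nu$, is by construction a solution in the sense of Definition~\ref{def:control_process}.

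The main obstacle I expect is establishing the closed-graph (upper semicontinuity) property of $\mathcal{R}$, because it intertwines three convergences: $m^k\to m$ uniformly forces $\phi[m^k]\to\phi[m]$, which must in turn force the optimal-control sets to converge in the appropriate (Kuratowski upper-limit / weak-$*$ on relaxed controls) sense, and finally the solutions of the linear Kolmogorov ODE must converge to a solution driven by a limiting optimal control. The technically delicate point is passing to the limit in the optimality constraint~(\ref{incl:hat_nu}): one needs the upper semicontinuity of the argmax in~(\ref{incl:hat_nu}) with respect to $(t,m,\phi)$, plus a compactness argument on the relaxed controls (the Young-measure / weak-$*$ topology on $L^\infty([t_0,T];\mathcal{P}(U))$ is sequentially compact), so that a weak-$*$ limit of optimal controls remains optimal for the limiting data and generates the limiting flow. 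Once upper semicontinuity is in hand, the Kakutani--Fan--Glicksberg theorem applies directly and yields the claimed solution.
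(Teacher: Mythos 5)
Your overall architecture (relaxation, a set-valued best-response map, Kakutani--Fan--Glicksberg) is in the same family as the paper's argument, and your compactness and upper-semicontinuity concerns are real but surmountable. The genuine gap is step (ii): convexity of $\mathcal{R}(m)$. Convexity of the velocity sets $\{\tilde m\,\mathcal{Q}(t,m(t),\nu):\nu\text{ optimal}\}$ does \emph{not} imply convexity of the set of trajectories: the solution set of a differential inclusion with convex compact values is in general not convex. A two-dimensional bilinear example makes this concrete: for $\dot x=uAx$, $u\in[-1,1]$, $A$ the rotation generator, $x(0)=(1,0)$, every velocity set is a segment, yet all trajectories stay on the unit circle (since $\frac{d}{dt}\|x\|^2=2u\,x^{T}Ax=0$), so the midpoint of the two trajectories $(\cos t,\pm\sin t)$ is not a trajectory. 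Your Kolmogorov dynamics has exactly this bilinear (state $\times$ control) structure, so the inference ``velocity sets convex $\Rightarrow$ $\mathcal{R}(m)$ convex'' is a non sequitur. The situation is worse with the equation you actually cite, (\ref{eq:motion_relaxed_star}), in which $\mathcal{Q}$ is evaluated at the new flow itself: then $\tilde m$ solves a nonlinear ODE, convex combinations of responses are in general not responses to any admissible control, and Kakutani--Fan--Glicksberg cannot be invoked.

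This is precisely the difficulty the paper's proof is engineered to avoid: the fixed point is taken not in the flow variable but in the control variable. The paper introduces control (Young) measures $\alpha=(\alpha_1,\ldots,\alpha_d)\in\mathcal{U}^d$ on $[t_0,T]\times U$, defines $m[\cdot,\alpha]$, $\phi[\cdot,\alpha]$ and the argmax graphs $\mathcal{K}_i[\alpha]$, and studies the map $\Phi(\alpha)=\{\beta\in\mathcal{U}^d:\operatorname{supp}(\beta_i)\subset\mathcal{K}_i[\alpha]\}$; its values are convex and compact \emph{for free}, because measures supported on a fixed compact set form a convex compact set, and fixed points are translated into mean field game solutions by Proposition \ref{prop:equivalence} (the continuity/u.s.c. work is Lemmas \ref{lm:m_continuous} and \ref{lm:Phi_upper_semi}). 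If you want to keep your best-response-on-flows scheme, two repairs are needed: (a) freeze the argument of $\mathcal{Q}$ at the old flow, so the response solves the \emph{linear} equation $\dot{\tilde m}(t)=\tilde m(t)\,\mathcal{Q}(t,m(t),\hat\nu(t))$ (fixed points still solve the MFG, since there $\tilde m=m$); and (b) prove convexity by state-dependent re-weighting of relaxed controls: given optimal $\nu^1,\nu^2$ with responses $\tilde m^1,\tilde m^2$, the control $\bar\nu_i(t)=\bigl(\lambda\tilde m^1_i(t)\nu^1_i(t)+(1-\lambda)\tilde m^2_i(t)\nu^2_i(t)\bigr)\big/\bigl(\lambda\tilde m^1_i(t)+(1-\lambda)\tilde m^2_i(t)\bigr)$ is still optimal (the argmax in (\ref{incl:hat_nu}) is a convex subset of $\mathcal{P}(U)$ independent of $\tilde m$) and generates exactly $\lambda\tilde m^1+(1-\lambda)\tilde m^2$; equivalently, work with the joint measures $\rho_i(t,du)=\tilde m_i(t)\nu_i(t,du)$, in which dynamics and constraints become affine. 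Without one of these repairs the proof does not go through.
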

The proof is given in the Appendix. It relies on the fixed point arguments and representation of the feedback strategies by control measures.

Below we also will use the probabilistic representation of the finite state mean field game. It is convenient to introduce the dynamics of the representative player using generator. Let $t\in [0,T]$, $i\in \{1,\ldots,d\}$, $m\in\Sigma^d$, $\nu\in(\mathcal{P}(U))^d$. Define the generator  $\Lambda_t[m,\nu]$ that is a linear operator  on $\rd$ by the following rule: for $\psi\in \mathbb{R}^d$,  $\Lambda_t[m,\nu]\psi$ is the vector with coordinates \[(\Lambda_t[m,\nu]\psi)_i \triangleq \sum_{j=1}^d \mathcal{Q}_{i,j}(t,m,\nu)\psi(j),\ \ i=1,\ldots,d. \]  Given $s,r\in [0,T]$, $s<r$, a flow of probabilities $m(\cdot)$ and a relaxed feedback strategy $\eta(\cdot)$, we say that the 5-tuple $(\Omega,\mathcal{F},\{\mathcal{F}_t\}_{t\in [s,r]},P,X)$, where $(\Omega,\mathcal{F},\{\mathcal{F}_t\}_{t\in [s,r]},P)$ is a filtered probability space, and $X$ is a $\{\mathcal{F}_t\}_{t\in [s,r]}$-adapted stochastic process taking values in $\{1,\ldots,d\}$, provides a motion of the representative player on $[r,s]$ if, for any $\psi\in\mathbb{R}^d$, the process
\[\psi_{X(t)}-\int_{s}^{t}(\Lambda_\tau[m(\tau),\eta(\tau)]\psi)_{X(\tau)}d\tau\] is a  $\{\mathcal{F}_t\}_{t\in [s,r]}$-martingale.

Notice that, for every initial probability on $\{1,\ldots,d\}$, at least one  motion of the representative player always exists~\cite[Theorem 3.11]{Kol_book} (see also~\cite[Propositions 4.1 and Example 4.5]{Kolokoltsov_Li_Yang_2011}). 
One can introduce the probabilities  $\mu_i(t)\triangleq P(X(t)=i)$. If $\mu_*=(\mu_{*,i})_{i=1}^d$ is the initial distribution at the time $s$, then the vector $\mu(t)=(\mu_i(t))_{i=1}^d$ obeys the Kolmogorov equation
\begin{equation}\label{eq:mu}\frac{d}{dt}\mu(t)=\mu(t)\mathcal{Q}(t,m(t),\eta(t)),\ \ \mu(s)=\mu_*.\end{equation}

Now let us reformulate Definition \ref{def:control_process} within the framework of the probabilistic approach.

\begin{proposition}\label{prop:equiv:probability}The pair $(\phi(\cdot),m(\cdot))$ solves the mean field game with the equilibrium feedback strategy $\hat{\nu}(\cdot)$, if and only if
	(\ref{eq:motion_relaxed_star}),~(\ref{eq:HJ_def}) holds and $\hat{\nu}(\cdot)$ is the optimal feedback relaxed strategy at any initial distribution for the Markov decision problem  on $[t_0,T]$
	\begin{equation}\label{criterion:probability}
	\text{maximize }\mathbb{E}\left[\sigma(X(T),m(T))+\int_{t_0}^Tg(t,X(t),m(t),\eta(t))dt\right]
	\end{equation}
	subject to a feedback relaxed strategy $\eta(\cdot):[t_0,T]\rightarrow (\mathcal{P}(U))^d$ and a corresponding motion of the representative player $(\Omega,\mathcal{F},\{\mathcal{F}_t\}_{t\in [t_0,T]},P,X)$ at any initial position, where $\mathbb{E}$ stands for the expectation according to the probability $P$.
\end{proposition}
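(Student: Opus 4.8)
The plan is to recognize this as a verification theorem for the Markov decision problem~(\ref{criterion:probability}) together with its converse: once the Bellman equation~(\ref{eq:HJ_def}) is imposed, the pointwise Hamiltonian maximization~(\ref{incl:hat_nu}) is exactly the infinitesimal criterion for global optimality. The main analytic tool is a time-dependent Dynkin formula for the representative player. Upgrading the defining martingale property from a constant test vector $\psi$ to the absolutely continuous trajectory $\phi(\cdot)$, I claim that for every admissible pair $(\eta(\cdot),X)$ the process
\[ \phi_{X(t)}(t)-\int_{t_0}^{t}\left[\left(\frac{d\phi}{d\tau}(\tau)\right)_{X(\tau)}+\bigl(\Lambda_\tau[m(\tau),\eta(\tau)]\phi(\tau)\bigr)_{X(\tau)}\right]d\tau \]
is an $\{\mathcal{F}_t\}$-martingale.

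Next I would derive the fundamental identity. Taking expectations over $[t_0,T]$, inserting the terminal condition $\phi(T)=\sigma(m(T))$ so that $\phi_{X(T)}(T)=\sigma(X(T),m(T))$, and substituting $\frac{d}{dt}\phi=-H(t,m,\phi)$, the objective in~(\ref{criterion:probability}) becomes
\[ \mathbb{E}\bigl[\phi_{X(t_0)}(t_0)\bigr]+\mathbb{E}\int_{t_0}^{T}\Delta_{X(t)}(t,\eta(t))\,dt, \]
where \[ \Delta_i(t,\eta)\triangleq g_i(t,m(t),\eta)+\sum_{j=1}^{d}\mathcal{Q}_{i,j}(t,m(t),\eta)\phi_j(t)-H_i(t,m(t),\phi(t)). \] By the definition of the Hamiltonian~(\ref{intro:H_i}) as a maximum over $\mathcal{P}(U)$, one has $\Delta_i(t,\eta)\le 0$ for every admissible $\eta$, with equality precisely when $\eta_i(t)$ attains the $\operatorname{Argmax}$ in~(\ref{incl:hat_nu}). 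Since $\mathbb{E}[\phi_{X(t_0)}(t_0)]$ depends only on the law of $X(t_0)$, the identity bounds the value of~(\ref{criterion:probability}) above by $\mathbb{E}[\phi_{X(t_0)}(t_0)]$ for every initial distribution.

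The two implications now follow from this identity. If $\hat{\nu}$ obeys~(\ref{incl:hat_nu}) then $\Delta_{X(t)}(t,\hat{\nu}(t))\equiv0$, so $\hat{\nu}$ attains the upper bound and is optimal for every initial distribution. Conversely, assume $\hat{\nu}$ is optimal at every initial distribution. A measurable selection from the $\operatorname{Argmax}$ in~(\ref{incl:hat_nu})---nonempty and measurably varying because $U$ is compact and the data continuous by (C1)--(C2)---produces an admissible strategy with $\Delta\equiv0$, which shows the upper bound $\mathbb{E}[\phi_{X(t_0)}(t_0)]$ is in fact the optimal value. Optimality of $\hat{\nu}$ then forces $\mathbb{E}\int_{t_0}^{T}\Delta_{X(t)}(t,\hat{\nu}(t))\,dt=0$, and since $\Delta\le0$ this gives $\Delta_{X(t)}(t,\hat{\nu}(t))=0$ for a.e. $t$, $P$-a.s. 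Choosing the initial distribution with full support and using that the boundedness of the rates $\mathcal{Q}_{i,j}$ (guaranteed by (C1)--(C3)) keeps $P(X(t)=i)>0$ for every $i$ and $t$, I condition on $\{X(t)=i\}$ to conclude $\Delta_i(t,\hat{\nu}(t))=0$ for a.e. $t$ and each $i$, which is exactly~(\ref{incl:hat_nu}).

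The main obstacle is the rigorous justification of the time-dependent Dynkin formula, namely upgrading the martingale property recorded in the text from constant vectors $\psi$ to the absolutely continuous $\phi(t)$; the natural route is to approximate $\phi$ by functions that are piecewise constant in time, apply the stated martingale property on each subinterval, and pass to the limit, the boundedness of $\mathcal{Q}$ and $g$ supplying the uniform integrability needed both for the limit and for the expectations above. The remaining points---the measurable selection certifying that $\mathbb{E}[\phi_{X(t_0)}(t_0)]$ is attained, and the positivity of the occupation probabilities used in the converse---are routine consequences of the compactness of $U$ and the boundedness of the rates, but they are exactly what promotes a one-sided verification statement to the full equivalence.
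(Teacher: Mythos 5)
Your proof is correct and follows in substance the same route as the paper: the paper's own proof consists of reducing the proposition to the equivalence of optimality in~(\ref{criterion:probability}) with the pointwise maximization~(\ref{incl:hat_nu}) and then simply citing the dynamic programming principle, while your verification identity --- the Dynkin formula with the time-dependent test vector $\phi(\cdot)$, the substitution of~(\ref{eq:HJ_def}) and the terminal condition, and the sign condition $\Delta_i\le 0$ coming from $H_i$ being a maximum over $\mathcal{P}(U)$ --- is precisely a proof of that cited principle. The points you flag as requiring care (upgrading the martingale property from constant $\psi$ to absolutely continuous $\phi(\cdot)$ by piecewise-constant approximation, the measurable selection from the argmax giving attainment of the bound $\mu_0\phi(t_0)$, and the positivity of the occupation probabilities $\mu_i(t)>0$ under bounded rates, needed in the converse to recover~(\ref{incl:hat_nu}) for every state $i$) are all sound and are exactly what the paper leaves implicit in its one-line appeal to dynamic programming.
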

\begin{proof}
	It suffices to prove that the fact that $\hat{\nu}$ is optimal control for the Markov decision problem on $[t_0,T]$ with payoff given by~(\ref{criterion:probability}) subject to a strategy $\eta$ and a corresponding motion of the representative player $(\Omega,\mathcal{F},\{\mathcal{F}_t\}_{t\in [t_0,T]},P,X)$ is equivalent to~(\ref{incl:hat_nu}). This directly follows from the dynamic programming principle.
\end{proof}
Notice that the expectation of payoff~(\ref{criterion:probability}) is determined only by $\eta(\cdot)$, $m(\cdot)$ and the initial distribution $\mu_0$ equation (\ref{eq:mu}) with $s=t_0$ and $\mu_*=\mu_0$  by the formula:
\begin{equation}\label{equality:exp_payoff_mu}
\begin{split}
\mathbb{E}\Bigl[\sigma(X(T),&m(T))+\int_{t_0}^Tg(t,X(t),m(t),\eta(t))dt\Bigr]\\&=\mu(T)\sigma(m(T))+\int_{t_0}^T\mu(t)g(t,m(t),\eta(t))dt.\end{split}
\end{equation}
Thus, the optimal feedback strategies do not depend on the concrete choice of the realization of the motion of the representative player.

\section{Optimal control reformulation}\label{sect:optimal_control}
The purpose of the section is to show that the   finite state mean field game interpreted in the sense of Definition \ref{def:control_process} is equivalent to the following control problem:
\begin{equation}\label{criterion:control}
\begin{split}
\text{minimize }J(\phi(\cdot),&m(\cdot),\mu(\cdot),\nu(\cdot))\triangleq \\&\mu_0\phi(t_0)-\mu(T)\sigma(m(T))+\int_{t_0}^T\mu(t)g(t,m(t),\nu(t))dt\end{split}
\end{equation} subject to  $(\phi(\cdot),m(\cdot),\mu(\cdot),\nu(\cdot))$ satisfying 
\begin{equation}\label{eq:dymanics_m_control}
\frac{d}{dt}m(t)=m(t)\mathcal{Q}(t,m(t),\nu(t)), 
\end{equation}
\begin{equation}\label{eq:dynamics_mu_sum}
\frac{d}{dt}\mu(t)=\mu(t)\mathcal{Q}(t,m(t),\nu(t)), 
\end{equation} 
\begin{equation}\label{eq:Bellman_control}
\frac{d}{dt}\phi(t)=-H(t,m(t),\phi(t)),
\end{equation}
\begin{equation}\label{constraints:optimal_control}
m(t_0)=m_0,\, \mu(t_0)=\mu_0,\, \phi(T)=\sigma(m(T)).
\end{equation}

Notice that the state vector for this problem is $(\phi,m,\mu)$, where $\phi\in\rd$, $m,\mu\in \Sigma^d$, when the control parameter is $\nu=(\nu_1,\ldots,\nu_d)\in (\mathcal{P}(U))^d$. One may regard the vector $\phi(t)$ as the upper bound of the rewards; $m(t)$ stands for the distribution of all players;  $\mu(t)$ describes the evolution of fictitious player.

\begin{proposition}\label{prop:J_leq} For every control process $(\phi(\cdot),m(\cdot),\mu(\cdot),\nu(\cdot))$ satisfying~(\ref{eq:dymanics_m_control}),~(\ref{eq:dynamics_mu_sum}),~(\ref{eq:Bellman_control}),~(\ref{constraints:optimal_control})
	$$J(\phi(\cdot),m(\cdot),\mu(\cdot),\nu(\cdot))\geq 0. $$
\end{proposition}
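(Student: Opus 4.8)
The plan is to show that the functional $J$ is nonnegative by recognizing it as the gap between the optimal outcome of the representative player and the outcome produced by the (possibly suboptimal) control $\nu(\cdot)$. The key observation is that $\phi(\cdot)$, being a solution of the Bellman equation~(\ref{eq:Bellman_control}) with the Hamiltonian maximizing over all relaxed controls, dominates the running cost generated by any particular $\nu$. First I would compute the time derivative of the bilinear quantity $t\mapsto\mu(t)\phi(t)$. Since $\mu(\cdot)$ satisfies~(\ref{eq:dynamics_mu_sum}) and $\phi(\cdot)$ satisfies~(\ref{eq:Bellman_control}), the product rule yields
\[\frac{d}{dt}\bigl(\mu(t)\phi(t)\bigr)=\mu(t)\mathcal{Q}(t,m(t),\nu(t))\phi(t)-\mu(t)H(t,m(t),\phi(t)).\]

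Next I would use the defining property of the Hamiltonian, namely that for each $i$
\[H_i(t,m,\phi)=\max_{\nu_i\in\mathcal{P}(U)}\Bigl[\sum_{j=1}^d\mathcal{Q}_{i,j}(t,m,\nu_i)\phi_j+g_i(t,m,\nu_i)\Bigr]\geq \sum_{j=1}^d\mathcal{Q}_{i,j}(t,m,\nu_i(t))\phi_j+g_i(t,m,\nu_i(t)).\]
Multiplying the $i$-th inequality by $\mu_i(t)\geq 0$ and summing over $i$ gives the crucial pointwise bound
\[\mu(t)H(t,m(t),\phi(t))\geq \mu(t)\mathcal{Q}(t,m(t),\nu(t))\phi(t)+\mu(t)g(t,m(t),\nu(t)).\]
Substituting this into the expression for the derivative shows that
\[\frac{d}{dt}\bigl(\mu(t)\phi(t)\bigr)\leq -\mu(t)g(t,m(t),\nu(t)).\]

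Integrating this differential inequality over $[t_0,T]$ and rearranging is the final step. Using the initial condition $\mu(t_0)=\mu_0$ and the terminal condition $\phi(T)=\sigma(m(T))$ from~(\ref{constraints:optimal_control}), I obtain
\[\mu(T)\sigma(m(T))-\mu_0\phi(t_0)\leq -\int_{t_0}^T\mu(t)g(t,m(t),\nu(t))\,dt,\]
which upon rearrangement is exactly $J(\phi(\cdot),m(\cdot),\mu(\cdot),\nu(\cdot))\geq 0$. The sign bookkeeping is routine; the one place demanding care is ensuring that the nonnegativity of each coordinate $\mu_i(t)$ is genuinely available so that multiplying the Hamiltonian inequalities by $\mu_i(t)$ preserves their direction. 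This follows because $\mu(\cdot)$ solves a Kolmogorov equation, so the simplex $\Sigma^d$ (and in particular the positivity of coordinates) is invariant under~(\ref{eq:dynamics_mu_sum}); I would invoke this invariance explicitly. The main conceptual point—rather than any computational obstacle—is the verification inequality for the Hamiltonian, which is what converts the equality in the Bellman dynamics into the one-sided estimate driving the argument.
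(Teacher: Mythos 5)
Your proof is correct, but it takes a genuinely different route from the paper's. The paper argues by decomposition: it writes $\mu_0=\sum_{k}\mu_{0,k}e^k$, introduces the fundamental solutions $\mu^k(\cdot)$ of the Kolmogorov equation~(\ref{eq:dynamics_mu_sum}) started from the coordinate vectors $e^k$, splits $J=\sum_k\mu_{0,k}J_k$ with $J_k$ defined in~(\ref{intro:J_k}), and then concludes $J_k\geq 0$ by \emph{invoking} the probabilistic interpretation that $\phi_k(t_0)$, as the Bellman value, is the maximal expected reward of a representative player started from state $k$, so it dominates the reward produced by the particular strategy $\nu$. You instead differentiate $\mu(t)\phi(t)$, use the maximizing property of the Hamiltonian coordinatewise ($H_i\geq \sum_j\mathcal{Q}_{i,j}\phi_j+g_i$ for any relaxed control), multiply by $\mu_i(t)\geq 0$, and integrate; this is a direct verification argument that \emph{proves}, rather than cites, the dynamic-programming comparison the paper relies on, and it only requires the simplex invariance of the Kolmogorov dynamics, which you correctly flag as the one point needing care. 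What the paper's route buys is the family $(\mu^k,J_k)$ itself, which is reused verbatim in the proof of Theorem~\ref{th:equivalence control}; what your route buys is brevity and self-containedness (it is essentially the computation behind Lemma~\ref{lm:z_property} combined with an inequality in place of Lemma~\ref{lm:z_nondecrease}'s appeal to the Markov decision problem). One bookkeeping remark: your final inequality is $\mu_0\phi(t_0)-\mu(T)\sigma(m(T))-\int_{t_0}^T\mu(t)g(t,m(t),\nu(t))\,dt\geq 0$, which matches the sign convention of~(\ref{intro:J_k}),~(\ref{equality:J_J_k}) and the way $J$ is used in the proof of Theorem~\ref{th:viability}; the plus sign before the integral in display~(\ref{criterion:control}) is evidently a typo in the paper, so your claim that this is ``exactly $J\geq 0$'' is right for the intended definition of $J$, and this discrepancy is the paper's inconsistency, not a gap in your argument.
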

\begin{proof}
	First, let $e^k$ stand for the $k$-th coordinate vector of $\mathbb{R}^{d*}$. Notice that, for any $\mu_0=(\mu_{0,1},\ldots,\mu_{0,d})\in\Sigma^d$, 
	$$\mu_0=\sum_{k=1}^d\mu_{0,k}e^k.$$
	
	Furthermore, let $[t_0,T]\ni t\mapsto \mu^k(t)\in\Sigma^d$ be a solution of the initial value problem
	\begin{equation}\label{eq:dynamics_mu_k}
	\frac{d}{dt}\mu^k(t)=\mu^k(t)\mathcal{Q}(t,m(t),\nu(t)),\ \ \mu^k(t_0)=e^k.
	\end{equation} The trajectory $\mu^k(\cdot)$ describes the probability distribution for the representative player who starts at time $t_0$ from the state $k$. Notice that, if $\mu(\cdot)$ satisfies~(\ref{eq:dynamics_mu_sum}), then
	$$\mu(\cdot)=\sum_{k=1}^d\mu_{0,k}\mu^k(\cdot). $$
	
	Put \begin{equation}\label{intro:J_k}
	\begin{split}
	J_k(\phi(\cdot)&,m(\cdot),\mu^k(\cdot),\nu(\cdot))\\ &\triangleq \phi_k(t_0)-\mu^k(T)\sigma(m(T))-\int_{t_0}^T\mu^k(t)g(t,m(t),\nu(t))dt\\ &= e^k\phi(t_0)-\mu^k(T)\sigma(m(T))-\int_{t_0}^T\mu^k(t)g(t,m(t),\nu(t))dt.
	\end{split}
	\end{equation}
	Notice that the quantity 
	$$\mu^k(T)\sigma(m(T))+\int_{t_0}^T\mu^k(t)g(t,m(t),\nu(t))dt $$ is the reward of the representative player who starts from the state $k$ and uses the strategy $\nu$ in the case when the distribution of all players is given by $m(\cdot)$. Since $\phi_k(\cdot)$ is  equal to the maximal expected reward of the representative player, we deduce that
	\begin{equation}\label{ineq:J_k}
	J_k(\phi(\cdot),m(\cdot),\mu^k(\cdot),\nu(\cdot))\geq 0.
	\end{equation}
	Furthermore, if $\mu(\cdot)$ satisfies~(\ref{eq:dynamics_mu_sum}), then
	\begin{equation}\label{equality:J_J_k}
	J(\phi(\cdot),m(\cdot),\mu(\cdot),\nu(\cdot))= \sum_{k=1}^d\mu_{0,k}J_k(\phi(\cdot),m(\cdot),\mu^k(\cdot),\nu(\cdot)).
	\end{equation} This and~(\ref{ineq:J_k}) give the conclusion of the proposition.
\end{proof}

In the following theorem, we assume that the data of the mean field game $m_0$ and $\sigma$ are fixed.

\begin{theorem}\label{th:equivalence control}
	For the function $t\mapsto (\phi^*(t),m^*(t))\in \rd\times\Sigma^d$ and the control $t\mapsto\nu^*(t)$ the following statements are equivalent.
	\begin{enumerate}[label=(\roman*)]
		\item\label{cond:th_control:mfg} The pair $(\phi^*(\cdot),m^*(\cdot))$  solves the mean field game with initial distribution $m_0$, whereas $\nu^*(\cdot)$ is the corresponding equilibrium feedback strategy.
		\item\label{cond:th_control:control:some} There exists a flow of probabilities $\mu^*(\cdot)$ and an initial distribution $\mu_0^*\in\Sigma^d$ with nonzero coordinates  such that
		$(\phi^*(\cdot),m^*(\cdot),\mu^*(\cdot),\nu^*(\cdot))$ provides the solution of the optimal control problem~(\ref{criterion:control})--(\ref{constraints:optimal_control}) for $\mu_0=\mu_0^*$.
		\item\label{cond:th_control:control:every} For every $\mu^*(\cdot)$ satisfying~(\ref{eq:dynamics_mu_sum}),  the control process
		$(\phi^*(\cdot),m^*(\cdot),\mu^*(\cdot),\nu^*(\cdot))$ is the solution of the optimal control problem~(\ref{criterion:control})--(\ref{constraints:optimal_control}) with $\mu_0=\mu^*(t_0)$.
		\item\label{cond:th_control:zero:some} The triple $(\phi^*(\cdot),m^*(\cdot),\nu^*(\cdot))$  is such that, for some $\mu_0\in \Sigma^d$ with nonzero coordinates and $\mu^*(\cdot)$, the process $(\phi^*(\cdot),m^*(\cdot),\mu^*(\cdot),\nu^*(\cdot))$ satisfies~(\ref{eq:dymanics_m_control})--(\ref{constraints:optimal_control}) and the equality
		$$J(\phi^*(\cdot),m^*(\cdot),\mu^*(\cdot),\nu^*(\cdot))=0.$$
		\item\label{cond:th_control:zero:every} For every $\mu^*(\cdot)$ satisfying~(\ref{eq:dynamics_mu_sum}) and $\mu_0=\mu^*(t_0)$, the control process  $(\phi^*(\cdot),m^*(\cdot),\mu^*(\cdot),\nu(\cdot))$ satisfies~(\ref{eq:dymanics_m_control})--(\ref{constraints:optimal_control}) and the following equality holds:
		$$J(\phi^*(\cdot),m^*(\cdot),\mu^*(\cdot),\nu^*(\cdot))=0.$$
	\end{enumerate}

\end{theorem}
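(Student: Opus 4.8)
The plan is to prove the chain of equivalences by establishing a cycle among the five statements, using Proposition~\ref{prop:J_leq} as the central lever. The key observation is that Proposition~\ref{prop:J_leq} already gives $J \geq 0$ for \emph{every} admissible control process, so the cost functional $J$ acts as a nonnegative ``defect'' that vanishes exactly at solutions of the mean field game. This immediately suggests that ``being a minimizer of the optimal control problem'' and ``achieving $J = 0$'' are two ways of expressing the same optimality, which is why statements \ref{cond:th_control:control:some}--\ref{cond:th_control:zero:every} naturally pair up. First I would show the skeleton $\ref{cond:th_control:mfg} \Rightarrow \ref{cond:th_control:zero:every} \Rightarrow \ref{cond:th_control:control:every} \Rightarrow \ref{cond:th_control:control:some} \Rightarrow \ref{cond:th_control:zero:some} \Rightarrow \ref{cond:th_control:mfg}$, since the minimum value of the problem is $0$ (attained, by Theorem~\ref{th:existence} applied together with the forthcoming implications), and hence a control process is a minimizer if and only if $J = 0$ on it.

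For the implication $\ref{cond:th_control:mfg} \Rightarrow \ref{cond:th_control:zero:every}$, I would start from a mean field game solution $(\phi^*,m^*)$ with equilibrium strategy $\nu^*$ and, for an arbitrary $\mu^*(\cdot)$ solving~(\ref{eq:dynamics_mu_sum}) with $\mu_0 = \mu^*(t_0)$, decompose $\mu^*(\cdot) = \sum_k \mu_{0,k}\mu^k(\cdot)$ as in the proof of Proposition~\ref{prop:J_leq} and use the identity~(\ref{equality:J_J_k}). The crux is that each $J_k$ equals zero: the quantity $\mu^k(T)\sigma(m^*(T)) + \int_{t_0}^T \mu^k(t)g(t,m^*(t),\nu^*(t))\,dt$ is the \emph{expected reward} of the representative player who starts in state $k$ and uses $\nu^*$, and since $\nu^*$ is optimal (by Definition~\ref{def:control_process}, condition~(\ref{incl:hat_nu}), equivalently Proposition~\ref{prop:equiv:probability}), this reward equals the value $\phi^*_k(t_0)$. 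Hence $J_k = 0$ for every $k$, so $J = 0$. The reverse direction $\ref{cond:th_control:zero:some} \Rightarrow \ref{cond:th_control:mfg}$ runs this argument backwards: if $J = 0$ for some $\mu_0$ with \emph{strictly positive} coordinates, then by~(\ref{equality:J_J_k}) together with each $J_k \geq 0$ (which is~(\ref{ineq:J_k})), the positivity of all $\mu_{0,k}$ forces $J_k = 0$ for every $k$; this says that for \emph{each} starting state $k$ the strategy $\nu^*$ attains the maximal reward $\phi^*_k(t_0)$, which by the dynamic programming principle (as in Proposition~\ref{prop:equiv:probability}) means $\nu^*$ satisfies the optimality inclusion~(\ref{incl:hat_nu}), so $(\phi^*,m^*)$ solves the mean field game.

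\textbf{The main obstacle} will be the nonzero-coordinate hypothesis in \ref{cond:th_control:control:some} and \ref{cond:th_control:zero:some}. This condition is exactly what allows one to pass from $J = 0$ (a single scalar equation, a weighted sum of the $J_k$) back to the \emph{individual} vanishing $J_k = 0$ for all $k$: since each $J_k \geq 0$ and the weights $\mu_{0,k}$ are all strictly positive, the weighted sum vanishes iff each summand does. If some coordinate of $\mu_0$ were zero, then $J = 0$ would impose no constraint on the corresponding $J_k$, and the representative player starting from that state need not behave optimally, so the equivalence with \ref{cond:th_control:mfg} would break. I would therefore be careful to invoke this positivity precisely at the two points where the scalar identity $J = 0$ must be upgraded to the full family $\{J_k = 0\}_{k=1}^d$.

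The remaining links are essentially bookkeeping: \ref{cond:th_control:zero:every} $\Rightarrow$ \ref{cond:th_control:control:every} and \ref{cond:th_control:zero:some} $\Rightarrow$ \ref{cond:th_control:control:some} follow because Proposition~\ref{prop:J_leq} guarantees $J \geq 0$ universally, so any process with $J = 0$ is automatically a global minimizer; conversely \ref{cond:th_control:control:some} $\Rightarrow$ \ref{cond:th_control:zero:some} requires knowing that the optimal value is $0$, which is established the moment we have exhibited \emph{one} process (coming from a mean field game solution, which exists by Theorem~\ref{th:existence}) achieving $J = 0$. I would also note that the ``for every $\mu^*(\cdot)$'' versions and the ``for some $\mu_0$'' versions are reconciled through the linearity~(\ref{equality:J_J_k}): once optimality is characterized state-by-state via the $J_k$, it holds for \emph{every} choice of initial distribution simultaneously, which is precisely why the universal and existential formulations coincide.
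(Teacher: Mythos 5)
Your proposal is correct and follows essentially the same route as the paper: the same cycle \ref{cond:th_control:mfg}$\Rightarrow$\ref{cond:th_control:zero:every}$\Rightarrow$\ref{cond:th_control:control:every}$\Rightarrow$\ref{cond:th_control:control:some}$\Rightarrow$\ref{cond:th_control:zero:some}$\Rightarrow$\ref{cond:th_control:mfg}, hinging on the decomposition $J=\sum_k\mu_{0,k}J_k$ with $J_k\geq 0$, the state-by-state optimality characterization via Proposition~\ref{prop:equiv:probability}, the nonzero-coordinate hypothesis to upgrade $J=0$ to $J_k=0$ for all $k$, and Theorem~\ref{th:existence} to identify the optimal value as $0$ in the step \ref{cond:th_control:control:some}$\Rightarrow$\ref{cond:th_control:zero:some}. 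The only cosmetic difference is that the paper also records the redundant direct edge \ref{cond:th_control:zero:every}$\Rightarrow$\ref{cond:th_control:zero:some}.
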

\begin{proof} The scheme of implications proving the desired equivalence can be illustrate by the following commutative  diagram.
	
	\[
	\xymatrix{
		\mathrm{(i)}  \ar[dr] &  \\
		\mathrm{(iv)}  \ar[u] & \ar[l]\ar[d] \mathrm{(v)} \\
		\mathrm{(ii)}  \ar[u] & \ar[l]  \mathrm{(iii)}
	}
	\]
	
	Obviously, we have that~\ref{cond:th_control:control:every} implies~\ref{cond:th_control:control:some}. Analogously,~\ref{cond:th_control:zero:some} follows from~\ref{cond:th_control:zero:every}. 
	
	Now we prove implication~\ref{cond:th_control:zero:every}$\Rightarrow$\ref{cond:th_control:control:every}.
	Let $\mu^{*,k}$, $k=1,\ldots,d$, satisfy~(\ref{eq:dynamics_mu_k}) for $m(\cdot)=m^*(\cdot)$ and $\nu=\nu^*(\cdot)$.
	Condition~\ref{cond:th_control:zero:every} implies that each value $J_k(\phi^*(\cdot),m^*(\cdot),\mu^{*,k}(\cdot),\nu^*(\cdot))=0$. Further, let $\mu_0=(\mu_{0,1},\ldots,\mu_{0,d})$ be an arbitrary element of $\Sigma^d$. The function $\mu^*(\cdot)$ defined by
	\[\mu^*(t)\triangleq \sum_{k=1}^d\mu_{0,k}\mu^{*,k}(t)\] solves~(\ref{eq:dynamics_mu_sum}) with initial condition $\mu^*(t_0)=\mu_0$. Moreover, we have that
	\[J(\phi^*(\cdot),m^*(\cdot),\mu^*(\cdot),\nu^*(\cdot))=\sum_{k=1}^d\mu_{0,k}J_k(\phi^*(\cdot),m^*(\cdot),\mu^{*,k}(\cdot),\nu^*(\cdot))=0.\]
	This and Proposition \ref{prop:J_leq} yield that  $(\phi^*(\cdot),m^*(\cdot),\mu^*(\cdot),\nu^*(\cdot))$ provides the solution of the optimal control problem~(\ref{criterion:control})--(\ref{constraints:optimal_control}) for every $\mu^*(\cdot)$, $\mu_0$ satisfying ODE~(\ref{eq:dynamics_mu_sum}) and initial condition $\mu^*(t_0)=\mu_0$. Thus,~\ref{cond:th_control:zero:every} implies~\ref{cond:th_control:control:every} and, consequently,~\ref{cond:th_control:control:some}. 
	
	By Proposition~\ref{prop:equiv:probability},  statement~\ref{cond:th_control:mfg} is equivalent to the following one:  equations~(\ref{eq:dymanics_m_control}),~(\ref{eq:Bellman_control}) hold true and $J_k(\phi^*(\cdot),m^*(\cdot),\mu^{*,k}(\cdot),\nu^*(\cdot))=0$. Indeed, let $(\Omega,\mathcal{F},\{\mathcal{F}_t\}_{t\in [t_0,T]},P,X^k)$ be the motion of the representative player corresponding to the control $\nu^*(\cdot)$, flow of probabilities $m^*(\cdot)$ and initial distribution $e^k$.  Proposition~\ref{prop:equiv:probability} yields that
	$$\phi_k^*(t_0)=    \mathbb{E}\left[\sigma(X^k(T),m(T))+\int_{t_0}^T g(t,X^k(t),m(t),\nu(t))dt\right], $$ where  $\mu^{*,k}_i=P(X^k(t)=i)$, $\phi^*(t_0)=(\phi^*_k(t_0))_{k=1}^d$. This and equality~(\ref{equality:exp_payoff_mu}) imply that
	$$\phi_k^*(t_0)=\mu^{*,k}(T)\phi^*(T)+\int_{t_0}^T\mu^{*,k}(t)g(t,m^*(t),\nu^*(t))dt. $$ Using the definition of $J_k$, we deduce that optimality of $\nu^*(\cdot)$ is equivalent to the equality $J_k(\phi^*(\cdot),m^*(\cdot),\mu^{*,k}(\cdot),\nu^*(\cdot))=0$. 
	
	Therefore, if~\ref{cond:th_control:mfg} holds, then one can use~(\ref{equality:J_J_k}) and deduce~\ref{cond:th_control:zero:every}. 
	
	Let us prove implication~\ref{cond:th_control:zero:some}$\Rightarrow$\ref{cond:th_control:mfg}. Since \[J(\phi^*(\cdot),m^*(\cdot),\mu^*(\cdot),\nu^*(\cdot))= \sum_{k=1}^d\mu_{0,k}J_k(\phi^*(\cdot),m^*(\cdot),\mu^{*,k}(\cdot),\nu^*(\cdot))\] and \[J_k(\phi^*(\cdot),m^*(\cdot),\mu^{*,k}(\cdot),\nu^*(\cdot))\geq 0,\, k=1,\ldots,d,\] we conclude that  $J_k(\phi^*(\cdot),m^*(\cdot),\mu^{*,k}(\cdot),\nu^*(\cdot))=0$. This together with~(\ref{eq:dymanics_m_control}) and~(\ref{eq:Bellman_control}) is equivalent to~\ref{cond:th_control:mfg}. 
	
	To complete the proof, we prove the implication~\ref{cond:th_control:control:some}$\Rightarrow$\ref{cond:th_control:zero:some}. Assume that $(\phi^*(\cdot),m^*(\cdot),\mu^*(\cdot),\nu^*(\cdot))$ provides the optimal solution of the control problem~(\ref{criterion:control}),~(\ref{eq:dymanics_m_control}),~(\ref{eq:dynamics_mu_sum}),~(\ref{eq:Bellman_control}). From Proposition~\ref{prop:J_leq}, it follows that \[J(\phi^*(\cdot),m^*(\cdot),\mu^*(\cdot),\nu^*(\cdot))\geq 0.\] Furthermore, recall (see Theorem \ref{th:existence}) that,  for the initial distribution $m_0$, there exists at least one solution of the finite state mean field game $(\phi^\natural(\cdot),m^\natural(\cdot))$ with the corresponding equilibrium  strategy $\nu^\natural(\cdot)$. Using the implication~\ref{cond:th_control:mfg}$\Rightarrow$\ref{cond:th_control:zero:every} for the solution of the mean field game $(\phi^\natural(\cdot),m^\natural(\cdot))$ and the equilibrium feedback strategy $\nu^\natural(\cdot)$, we deduce that 
	the infimum of $J(\phi'(\cdot),m'(\cdot),\mu'(\cdot),\nu'(\cdot))$ over the set of 4-tuples obeying~(\ref{eq:dymanics_m_control})--(\ref{eq:Bellman_control}) is not greater than $0$. This leads the equality
	$$J(\phi^*(\cdot),m^*(\cdot),\mu^*(\cdot),\nu^*(\cdot))= 0.$$ This means that~\ref{cond:th_control:control:some} implies~\ref{cond:th_control:zero:some}.
\end{proof}

\begin{corollary} If $m_0$ has nonzero-coordinates, then the following statements are equivalent.
	\begin{itemize}
		\item The pair $(\phi^*(\cdot),m^*(\cdot))$  solves the mean field game with the initial distribution $m_0$, whereas $\nu^*(\cdot)$ is the corresponding equilibrium  feedback strategy.
		\item The process $(\phi^*(\cdot),m^*(\cdot),\nu^*(\cdot))$ is optimal in the problem
		\[\begin{split}\text{minimize }J'(\phi(\cdot),m(\cdot),\nu(\cdot))\triangleq m_0\phi(&t_0)-m(T)\sigma(m(T))\\-&\int_{t_0}^Tm(t)g(t,m(t),\nu(t))dt\end{split}\] subject to ~(\ref{eq:dymanics_m_control}),~(\ref{eq:Bellman_control}) and
		$m(t_0)=m_0$, $\phi(T)=\sigma(m(T))$.
		\item The triple $(\phi^*(\cdot),m^*(\cdot),\nu^*(\cdot))$ satisfies~(\ref{eq:dymanics_m_control}),~(\ref{eq:Bellman_control}), the boundary condition
		$m^*(t_0)=m_0$, $\phi^*(T)=\sigma(m^*(T))$ and the equality
		$J'(\phi^*(\cdot),m^*(\cdot),\nu^*(\cdot))=0$.
	\end{itemize}
\end{corollary}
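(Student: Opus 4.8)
The plan is to read the corollary off Theorem~\ref{th:equivalence control} by specializing the auxiliary initial distribution to $\mu_0=m_0$. The hypothesis that $m_0$ has nonzero coordinates is exactly what makes this choice admissible in conditions~\ref{cond:th_control:control:some} and~\ref{cond:th_control:zero:some}, which demand a $\mu_0$ with nonzero coordinates. Under this specialization the three bullets of the corollary will turn out to be the images of statement~\ref{cond:th_control:mfg}, of statements~\ref{cond:th_control:control:some}/\ref{cond:th_control:control:every}, and of statements~\ref{cond:th_control:zero:some}/\ref{cond:th_control:zero:every}, respectively, so once the reduction is in place the equivalences are immediate.

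The reduction rests on the observation that, once $\mu_0=m_0$, the fictitious trajectory $\mu(\cdot)$ is forced to coincide with $m(\cdot)$. For a fixed admissible pair $(m(\cdot),\nu(\cdot))$, both~(\ref{eq:dymanics_m_control}) and~(\ref{eq:dynamics_mu_sum}) are the same linear ODE $\frac{d}{dt}y(t)=y(t)\mathcal{Q}(t,m(t),\nu(t))$ with coefficient matrix depending only on $t$; since $m(t_0)=\mu(t_0)=m_0$, uniqueness for linear systems gives $\mu(\cdot)\equiv m(\cdot)$. First I would record this, and then substitute $\mu=m$ into the cost $J$, which reproduces $J'$ (equivalently, evaluate the decomposition~(\ref{equality:J_J_k}) at $\mu_0=m_0$, using $m(\cdot)=\sum_{k}m_{0,k}\mu^k(\cdot)$). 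Consequently the map $(\phi,m,\mu,\nu)\mapsto(\phi,m,\nu)$ is a bijection from the feasible set of problem~(\ref{criterion:control})--(\ref{constraints:optimal_control}) with $\mu_0=m_0$ onto the feasible set of the reduced $J'$ problem, with inverse $(\phi,m,\nu)\mapsto(\phi,m,m,\nu)$, and it preserves the objective value. Hence $(\phi^*(\cdot),m^*(\cdot),\nu^*(\cdot))$ is optimal for $J'$ (resp.\ satisfies $J'=0$) precisely when $(\phi^*(\cdot),m^*(\cdot),m^*(\cdot),\nu^*(\cdot))$ is optimal for $J$ with $\mu_0=m_0$ (resp.\ satisfies $J=0$).

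With the bijection in hand I would match the statements. The first bullet is verbatim condition~\ref{cond:th_control:mfg}. The second bullet is, through the bijection, the optimality of $(\phi^*,m^*,m^*,\nu^*)$ for $\mu_0=m_0$: as a ``there exists'' assertion this is condition~\ref{cond:th_control:control:some} with $\mu_0^*=m_0$ (whose nonzero-coordinate requirement is met), while the converse optimality for $\mu_0=m_0$ is exactly what the ``for every'' condition~\ref{cond:th_control:control:every} delivers when applied to $\mu^*=m^*$. Likewise the third bullet, after the substitution $\mu=m$ and $J=J'$, is condition~\ref{cond:th_control:zero:some} with $\mu_0=m_0$, its converse being~\ref{cond:th_control:zero:every} applied to $\mu^*=m^*$. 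Theorem~\ref{th:equivalence control} then closes the loop among~\ref{cond:th_control:mfg}--\ref{cond:th_control:zero:every}, yielding the equivalence of the three bullets.

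I expect the main obstacle to be purely the bookkeeping of the middle paragraph: verifying that the identification $\mu=m$ really does convert the four-variable control problem into the three-variable one via a map that is simultaneously feasibility- and objective-preserving, so that both optimality and the vanishing of the cost transfer faithfully in each direction, and checking that the nonzero-coordinate hypothesis lines up with the corresponding requirements in conditions~\ref{cond:th_control:control:some} and~\ref{cond:th_control:zero:some}. No analytic input is needed beyond uniqueness for linear ODEs.
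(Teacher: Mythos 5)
Your proposal is correct and takes essentially the same route as the paper: the paper's proof likewise consists of invoking Theorem~\ref{th:equivalence control} together with the observation that $m(\cdot)$ and $\mu(\cdot)$ satisfying~(\ref{eq:dymanics_m_control}) and~(\ref{eq:dynamics_mu_sum}) with $m(t_0)=\mu(t_0)$ must coincide, which is exactly your reduction $\mu_0=m_0$, $\mu\equiv m$, $J=J'$. You merely spell out the feasibility-and-objective bookkeeping and the matching of bullets to conditions~\ref{cond:th_control:mfg}--\ref{cond:th_control:zero:every}, which the paper leaves implicit.
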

\begin{proof}
	The desired equivalence follows from Theorem~\ref{th:equivalence control} and the fact that, if $m(\cdot)$ and $\mu(\cdot)$ satisfying~(\ref{eq:dymanics_m_control}) and~(\ref{eq:dynamics_mu_sum}) respectively are such that $m(t_0)=\mu(t_0)$, then $m(\cdot)=\mu(\cdot)$. 
\end{proof}

\section{Dependence of the solution of the mean field game on the initial distribution}\label{sect:dependence} Generally, the solution of the mean field game is nonunique~\cite{Nonuniqueness_bayraktar_zhang}. Thus, it is reasonable to examine the multifunction assigning to an initial time and initial distribution of players a set of expected values of the representative player. This concept generalized the notion of value function briefly discussed in \S \ref{subsect:master_eq}. In this section, we study the value multfunction using the viability and attainability theories.

\begin{definition}\label{def:value_multifunction} We say that $\mathcal{V}:[0,T]\times\Sigma^d\rightrightarrows\rd$ is a value multifunction if, for any $t_0\in [0,T]$, $m_0\in\Sigma^d$, $\phi_0\in \mathcal{V}(t_0,m_0)$, there exists a solution of the mean field game $(\phi(\cdot),m(\cdot))$ such that
	\[\phi(t_0)=\phi_0,\ \ m(t_0)=m_0.\]
\end{definition}

We also consider the maximal value function that is multivalued mapping assigning to each pair $(t_0,m_0)$ the set of all vectors $\phi_0$ such that $\phi_0=\phi(t_0)$, $m_0=m(t_0)$ for some solution of the mean field game $(\phi(\cdot),m(\cdot))$. Denote the maximal value multifunction by~$\mathcal{W}$.

First, let us present the sufficient condition for a given multifunction $\mathcal{V}:[t_0,T]\times\Sigma^d\rightrightarrows\rd$ to be a value multifunction. To this end, we use the viability property for the dynamics in the space $\rd\times\Sigma^d\times\Sigma^d\times\mathbb{R}$ given by equations~(\ref{eq:dymanics_m_control})--(\ref{eq:Bellman_control}) and the following differential equation:
\begin{equation}\label{eq:z_viability}
\begin{split}
\frac{d}{dt}z(t)=-\mu(t)\mathcal{Q}&(t,m(t),\nu(t))\phi(t)\\&-\mu(t)H(t,m(t),\phi(t))-\mu(t)g(t,m(t),\nu(t)).\end{split}
\end{equation}

\begin{definition}\label{def:viability} We say that the set $\mathcal{A}\subset [0,T]\times\rd\times\Sigma^d\times\Sigma^d\times\mathbb{R}$ is viable w.r.t.  the dynamics~(\ref{eq:dymanics_m_control})--(\ref{eq:Bellman_control}),~(\ref{eq:z_viability}) if, for every $s,r\in [0,T]$, $s<r$, $(s,\phi_*,m_*,\mu_*,z_*)\in \mathcal{A}$, there exists a feedback relaxed control $\nu(\cdot)$ and a 4-tuple $(\phi(\cdot),m(\cdot),\mu(\cdot),z(\cdot))$ such that
	\begin{itemize}
		\item $\phi(\cdot)$, $m(\cdot)$, $\mu(\cdot)$, $z(\cdot)$ and $\nu(\cdot)$ satisfy~(\ref{eq:dymanics_m_control})--(\ref{eq:Bellman_control}),~(\ref{eq:z_viability});
		\item $\phi(s)=\phi_*$, $m(s)=m_*$, $\mu(s)=\mu_*$, $z(s)=z_*$;
		\item $(r,\phi(r),m(r),\mu(r),z(r))\in\mathcal{A}$.
	\end{itemize}
\end{definition}

\begin{theorem}\label{th:viability} Assume that, given a multifunction $\mathcal{V}:[0,T]\times\Sigma^d\rightrightarrows\rd$, one can find a closed set $\mathcal{A}\subset [0,T]\times\rd\times\Sigma^d\times\Sigma^d\times\mathbb{R}$ such that 
	\begin{itemize}
		\item $\mathcal{A}$ is viable w.r.t.~(\ref{eq:dymanics_m_control})--(\ref{eq:Bellman_control}),~(\ref{eq:z_viability});
		\item the inclusion $(T,\phi,m,\mu,z)\in\mathcal{A}$ implies that $\phi=\sigma(m)$, $z=0$;
		\item if $\phi_0\in\mathcal{V}(t_0,m_0)$, then $(t_0,\phi_0,m_0,\mu_0,0)\in\mathcal{A}$ for some $\mu_0\in\Sigma^d$ with nonzero coordinates.
	\end{itemize} Then, $\mathcal{V}$ is a value multifunction.
\end{theorem}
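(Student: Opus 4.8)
The plan is to verify Definition~\ref{def:value_multifunction} directly: I fix an arbitrary $t_0\in[0,T]$, $m_0\in\Sigma^d$ and $\phi_0\in\mathcal{V}(t_0,m_0)$, and I produce a solution $(\phi(\cdot),m(\cdot))$ of the mean field game with $\phi(t_0)=\phi_0$, $m(t_0)=m_0$. By the third hypothesis on $\mathcal{A}$ there is a $\mu_0\in\Sigma^d$ with strictly positive coordinates such that $(t_0,\phi_0,m_0,\mu_0,0)\in\mathcal{A}$. If $t_0=T$, the second hypothesis immediately forces $\phi_0=\sigma(m_0)$, which is the degenerate terminal solution, so from now on I assume $t_0<T$.

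The first step is to invoke viability in a single shot, with $s=t_0$ and $r=T$. Since $(t_0,\phi_0,m_0,\mu_0,0)\in\mathcal{A}$, Definition~\ref{def:viability} supplies a relaxed feedback control $\nu(\cdot)$ and a $4$-tuple $(\phi(\cdot),m(\cdot),\mu(\cdot),z(\cdot))$ solving the dynamics~(\ref{eq:dymanics_m_control})--(\ref{eq:Bellman_control}),~(\ref{eq:z_viability}) on the whole interval $[t_0,T]$, with $\phi(t_0)=\phi_0$, $m(t_0)=m_0$, $\mu(t_0)=\mu_0$, $z(t_0)=0$, and with terminal point $(T,\phi(T),m(T),\mu(T),z(T))\in\mathcal{A}$. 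I want to stress that no iterated partition/limiting construction is required here: the way viability is formulated already delivers a genuine trajectory on $[t_0,T]$ whose endpoint lies in $\mathcal{A}$. Because $(T,\phi(T),m(T),\mu(T),z(T))\in\mathcal{A}$, the second hypothesis yields the two terminal relations $\phi(T)=\sigma(m(T))$ and $z(T)=0$.

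The crux is then to read off that the cost $J(\phi(\cdot),m(\cdot),\mu(\cdot),\nu(\cdot))$ vanishes. The auxiliary variable $z$ is engineered precisely so that, along any solution of the dynamics satisfying the terminal relation $\phi(T)=\sigma(m(T))$, the increment $z(T)-z(t_0)$ reconstructs the functional $J$. Concretely, I would integrate~(\ref{eq:z_viability}) over $[t_0,T]$, substitute the product-rule identity $\frac{d}{dt}\bigl(\mu(t)\phi(t)\bigr)=\mu(t)\mathcal{Q}(t,m(t),\nu(t))\phi(t)-\mu(t)H(t,m(t),\phi(t))$, which follows from~(\ref{eq:dynamics_mu_sum}) and~(\ref{eq:Bellman_control}), and finally use $\mu(T)\sigma(m(T))=\mu(T)\phi(T)$ coming from the terminal relation, to identify $z(T)-z(t_0)$ with $J(\phi(\cdot),m(\cdot),\mu(\cdot),\nu(\cdot))$. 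Granting this identity, $z(t_0)=z(T)=0$ forces $J(\phi(\cdot),m(\cdot),\mu(\cdot),\nu(\cdot))=0$. I expect the honest sign bookkeeping in this integration by parts to be the main obstacle, since it is the one place where the exact form of~(\ref{eq:z_viability}) and the boundary data of~(\ref{constraints:optimal_control}) are genuinely used.

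Once $J=0$ is in hand, the collected data match the hypotheses of statement~\ref{cond:th_control:zero:some} of Theorem~\ref{th:equivalence control}: the triple $(\phi(\cdot),m(\cdot),\nu(\cdot))$, together with $\mu_0$ (which has nonzero coordinates) and $\mu(\cdot)$, satisfies~(\ref{eq:dymanics_m_control})--(\ref{constraints:optimal_control}) and the equality $J=0$. Hence the implication~\ref{cond:th_control:zero:some}$\Rightarrow$\ref{cond:th_control:mfg} of that theorem applies and shows that $(\phi(\cdot),m(\cdot))$ solves the mean field game with initial distribution $m_0$, with $\nu(\cdot)$ as equilibrium strategy. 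Since $\phi(t_0)=\phi_0$ and $m(t_0)=m_0$, and the point $(t_0,m_0,\phi_0)$ with $\phi_0\in\mathcal{V}(t_0,m_0)$ was arbitrary, $\mathcal{V}$ satisfies Definition~\ref{def:value_multifunction} and is therefore a value multifunction.
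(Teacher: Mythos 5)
Your proof is correct and is essentially the paper's own argument: a single application of the viability property with $s=t_0$, $r=T$, the terminal constraints supplied by the second hypothesis on $\mathcal{A}$, the identity $z(T)-z(t_0)=J$ (which the paper isolates as Lemma~\ref{lm:z_property}), and finally the implication \ref{cond:th_control:zero:some}$\Rightarrow$\ref{cond:th_control:mfg} of Theorem~\ref{th:equivalence control}. The sign trouble you anticipate is real but is a typo in the paper rather than a gap in your argument: your product rule $\frac{d}{dt}\bigl(\mu(t)\phi(t)\bigr)=\mu(t)\mathcal{Q}(t,m(t),\nu(t))\phi(t)-\mu(t)H(t,m(t),\phi(t))$ is the one consistent with~(\ref{eq:Bellman_control}), and the identification of $z(T)-z(t_0)$ with $J$ comes out exactly when the $\mu H$ term in~(\ref{eq:z_viability}) carries a plus sign and the integral term in the definition~(\ref{criterion:control}) of $J$ carries a minus sign (as is forced by~(\ref{equality:J_J_k}) and by the statement of Lemma~\ref{lm:z_property} itself), which is clearly the intended reading.
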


The proof of the Theorem relies on the following lemma.
\begin{lemma}\label{lm:z_property}
	If $(\phi(\cdot),m(\cdot),\mu(\cdot),z(\cdot),\nu(\cdot))$ obeys~(\ref{eq:dymanics_m_control})--(\ref{eq:Bellman_control}),~(\ref{eq:z_viability}), then, for every $s,r\in [0,T]$, $s<r$,
	\begin{equation}\label{equality:z_r_s}z(r)-z(s)=\mu(s)\phi(s)-\mu(r)\phi(r)-\int_s^r\mu(t)g(t,m(t),\nu(t))dt. \end{equation} 
\end{lemma}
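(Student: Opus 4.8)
The plan is to deduce~(\ref{equality:z_r_s}) from the fundamental theorem of calculus applied to the single scalar-valued function $t\mapsto z(t)+\mu(t)\phi(t)$. The strategy is to show that this function has derivative equal to $-\mu(t)g(t,m(t),\nu(t))$ throughout $[s,r]$; integrating that derivative from $s$ to $r$ and then isolating $z(r)-z(s)$ reproduces the claimed identity, with the boundary values $\mu(s)\phi(s)$ and $\mu(r)\phi(r)$ appearing precisely from the evaluated product term.

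First I would differentiate the scalar $\mu(t)\phi(t)$ by the product rule, keeping in mind that $\mu(t)$ is a row-vector, $\phi(t)$ a column-vector, and $\mathcal{Q}(t,m(t),\nu(t))$ a $d\times d$ matrix, so that every expression below is a genuine scalar. Substituting the Kolmogorov-type dynamics~(\ref{eq:dynamics_mu_sum}) for $\dot\mu$ and the Bellman equation~(\ref{eq:Bellman_control}) for $\dot\phi$ yields
\[\frac{d}{dt}\bigl(\mu(t)\phi(t)\bigr)=\mu(t)\mathcal{Q}(t,m(t),\nu(t))\phi(t)-\mu(t)H(t,m(t),\phi(t)).\]
Adding to this the right-hand side of~(\ref{eq:z_viability}) for $\dot z(t)$, the $\mu(t)\mathcal{Q}(t,m(t),\nu(t))\phi(t)$ contributions and the $\mu(t)H(t,m(t),\phi(t))$ contributions cancel in pairs, leaving
\[\frac{d}{dt}\bigl(z(t)+\mu(t)\phi(t)\bigr)=-\mu(t)g(t,m(t),\nu(t)).\]

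Integrating this last identity over $[s,r]$ and rearranging gives exactly~(\ref{equality:z_r_s}). There is no real obstacle in this argument: all coordinates are absolutely continuous in $t$, since the right-hand sides of the governing ODEs are measurable and bounded in $t$ (the controls enter through the probability measures $\nu_i(t,\cdot)$ integrated against the continuous, hence bounded, data $Q_{i,j}$ and $g$) and Lipschitz in the state on the compact simplices, so the product rule and the fundamental theorem of calculus are fully justified. The only step demanding genuine care is the vector/matrix bookkeeping in the product rule and the verification that the $\mathcal{Q}\phi$- and $H$-terms indeed cancel, which is a short direct computation rather than a conceptual difficulty.
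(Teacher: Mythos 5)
You take exactly the paper's route: differentiate the scalar $t\mapsto\mu(t)\phi(t)$ by the product rule, substitute the dynamics of $\mu$ and $\phi$, combine with the dynamics of $z$, and integrate; your remarks on absolute continuity are fine. Moreover, your product rule is computed correctly: from~(\ref{eq:dynamics_mu_sum}) and~(\ref{eq:Bellman_control}),
\[
\frac{d}{dt}\bigl[\mu(t)\phi(t)\bigr]=\mu(t)\mathcal{Q}(t,m(t),\nu(t))\phi(t)-\mu(t)H(t,m(t),\phi(t)),
\]
whereas the paper's own proof writes this derivative with $+\,\mu H$, which is incompatible with $\dot\phi=-H$.

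The gap is in the next step. The right-hand side of~(\ref{eq:z_viability}) as printed is $-\mu\mathcal{Q}\phi-\mu H-\mu g$; adding it to your derivative, the $\mu\mathcal{Q}\phi$ terms do cancel, but the two $H$-terms both carry a minus sign, so they double rather than cancel:
\[
\frac{d}{dt}\bigl[z(t)+\mu(t)\phi(t)\bigr]=-2\mu(t)H(t,m(t),\phi(t))-\mu(t)g(t,m(t),\nu(t)).
\]
Integrating this produces~(\ref{equality:z_r_s}) with a spurious extra term $-2\int_s^r\mu(t)H(t,m(t),\phi(t))\,dt$, which does not vanish in general. So the cancellation you assert fails for~(\ref{eq:z_viability}) as printed --- and, in fact, the lemma itself is false for~(\ref{eq:z_viability}) as printed. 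Everything becomes consistent (and your argument becomes exactly correct) if the $H$-term in~(\ref{eq:z_viability}) is read with a plus sign, i.e.\ $\dot z=-\mu\mathcal{Q}\phi+\mu H-\mu g$: this is evidently the intended equation, since it is the only reading compatible with the stated identity~(\ref{equality:z_r_s}), with the paper's own computation (whose sign error in the product rule exactly compensates the misprint), and with the later use of the lemma in Lemma~\ref{lm:z_nondecrease} and Theorem~\ref{th:viability}. Since you computed the product rule correctly, you were in a position to detect this inconsistency; a blind proof should have flagged the sign mismatch rather than silently asserting a cancellation that the printed equation does not support.
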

\begin{proof}
	In fact,
	\begin{equation*}
	\mu(r)\phi(r)-\mu(s)\phi(s)=\int_s^r\frac{d}{dt}\left[\mu(t)\phi(t)\right]dt.
	\end{equation*} Since $\mu(\cdot)$ and $\phi(\cdot)$ satisfy equations~(\ref{eq:dynamics_mu_sum}) and~(\ref{eq:Bellman_control}) respectively, we obtain that 
	$$\frac{d}{dt}[\mu(t)\phi(t)]=\mu(t)\mathcal{Q}(t,m(t),\nu(t))\phi(t)+\mu(t)H(t,m(t),\phi(t)).$$
	This and (\ref{eq:z_viability}) imply  representation~(\ref{equality:z_r_s}). 
	
\end{proof}

\begin{proof}[Proof of Theorem~\ref{th:viability}] Let $(t_0,\phi_0,m_0,\mu_0)$ be such that $$(t_0,\phi_0,m_0,\mu_0,0)\in\mathcal{A}, $$ and let $\mu_0$ have nonzero coordinates. Using the standard viability technique (see \cite[Theorem 3.3.4]{Aubin}), we deduce that there exists $(\phi(\cdot),m(\cdot),\mu(\cdot),z(\cdot),\nu(\cdot))$ satisfying~(\ref{eq:dymanics_m_control})--(\ref{eq:Bellman_control}),~(\ref{eq:z_viability}) and boundary conditions
	$$m(t_0)=m_0,\ \ \phi(t_0)=\phi_0,\ \ \mu(t_0)=\mu_0,\ \ z(t_0)=0, $$
	$$\phi(T)=\sigma(m(T)),\ \ z(T)=0. $$
	Using Lemma~\ref{lm:z_property} and definition of the function $J$ (see~(\ref{criterion:control})), we conclude that
	\begin{equation*}
	\begin{split}
	z(T)&=z(t_0)+\mu(t_0)\phi(t_0)-\mu(T)\phi(T)-\int_{t_0}^T\mu(t)g(t,m(t),\nu(t))dt\\ &=
	z(t_0)+J(m(\cdot),\phi(\cdot),\mu(\cdot),\nu(\cdot)). \end{split}\end{equation*} From the boundary conditions, it follows that 
	$$J(\phi(\cdot),m(\cdot),\mu(\cdot),\nu(\cdot))=0. $$ This means that, for $(\phi(\cdot),m(\cdot),\nu(\cdot))$,  statement~\ref{cond:th_control:zero:some} of Theorem~\ref{th:equivalence control} is in force. By Theorem~\ref{th:equivalence control} $(\phi(\cdot),m(\cdot))$ is a solution of the finite state mean field game. Thus, $\mathcal{V}$ is a value multifunction.
\end{proof}

Now let us characterize the maximal value multifunction $\mathcal{W}$ in the terms of the attainability domain. Denote
$$\hat{\vartheta}\triangleq (1/\sqrt{d},\ldots,1/\sqrt{d})=\frac{1}{\sqrt{d}}(1,\ldots,1)=\frac{1}{\sqrt{d}}\sum_{i=1}^de^k.$$

\begin{theorem}\label{th:set_of_solutions}
	
	Let $\mathcal{A}_*$ be the set of 5-tuples $(s,\phi(s),m(s),\mu(s),z(s))$ such that $s\in [0,T]$, when the 4-tuple $(\phi(\cdot),m(\cdot),\mu(\cdot),z(\cdot))$ satisfies~(\ref{eq:dymanics_m_control})--(\ref{eq:Bellman_control}),~(\ref{eq:z_viability}) for some relaxed strategy $\nu(\cdot)$ and the boundary condition
	$$m(T)\in\Sigma^d,\ \ \phi(T)=\sigma(m(T)),\ \ \mu(T)\in\Sigma^d,\ \ z(T)=0. $$
	Then, given $t_0\in [0,T]$, $m_0\in\Sigma^d$, \[\mathcal{W}(t_0,m_0)=\{\phi_0:(t_0,\phi_0,m_0,\hat{\vartheta},0)\in\mathcal{A}_*\}.\] 
\end{theorem}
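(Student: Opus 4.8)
The plan is to reduce both inclusions to the bookkeeping identity of Lemma~\ref{lm:z_property} combined with the equivalence of Theorem~\ref{th:equivalence control}. The decisive observation is that, setting $s=t_0$ and $r=T$ in~(\ref{equality:z_r_s}) and using the terminal condition $\phi(T)=\sigma(m(T))$ built into $\mathcal{A}_*$, the right-hand side of~(\ref{equality:z_r_s}) becomes exactly $J(\phi(\cdot),m(\cdot),\mu(\cdot),\nu(\cdot))$ evaluated with $\mu_0=\mu(t_0)$; hence along any admissible trajectory $z(T)-z(t_0)=J(\phi(\cdot),m(\cdot),\mu(\cdot),\nu(\cdot))$. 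Consequently, the two endpoint requirements $z(t_0)=0$ and $z(T)=0$ are together equivalent to $J=0$. This converts membership in $\mathcal{A}_*$, read at time $t_0$ with $z(t_0)=0$, into the statement ``$J=0$ subject to~(\ref{eq:dymanics_m_control})--(\ref{eq:Bellman_control}) and $\phi(T)=\sigma(m(T))$'', which is precisely the hypothesis of statement~\ref{cond:th_control:zero:some} of Theorem~\ref{th:equivalence control} once $\mu(t_0)$ is known to have nonzero coordinates.

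This is where the specific initial value $\hat\vartheta$ enters. Since $\hat\vartheta=\tfrac{1}{\sqrt d}\sum_k e^k$ has strictly positive entries, the linear decomposition~(\ref{equality:J_J_k}) gives $J=\tfrac{1}{\sqrt d}\sum_{k=1}^d J_k$ with each $J_k\geq 0$ by~(\ref{ineq:J_k}); hence $J=0$ if and only if $J_k=0$ for every $k$. I would stress that the $\mu$-equation~(\ref{eq:dynamics_mu_sum}) is linear and homogeneous in $\mu$, so the only property of $\hat\vartheta$ used is positivity of its coordinates, and the analysis is unaffected by the precise normalization of $\hat\vartheta$; accordingly the endpoint constraints that drive the argument are those on $\phi(T)$ and $z(T)$, the role of $\mu$ being merely to carry the linear functional through~(\ref{equality:z_r_s}).

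For the inclusion $\mathcal{W}(t_0,m_0)\subseteq\{\phi_0:(t_0,\phi_0,m_0,\hat\vartheta,0)\in\mathcal{A}_*\}$, I would take a solution $(\phi(\cdot),m(\cdot))$ of the mean field game with equilibrium strategy $\nu(\cdot)$, $\phi(t_0)=\phi_0$, $m(t_0)=m_0$, and adjoin $\mu(\cdot)$, $z(\cdot)$ solving~(\ref{eq:dynamics_mu_sum}),~(\ref{eq:z_viability}) forward from $\mu(t_0)=\hat\vartheta$, $z(t_0)=0$. The Bellman terminal condition gives $\phi(T)=\sigma(m(T))$; by Proposition~\ref{prop:equiv:probability} (equivalently, the equivalence established inside the proof of Theorem~\ref{th:equivalence control}) each $J_k=0$, so~(\ref{equality:J_J_k}) yields $J=0$ and the endpoint identity yields $z(T)=0$. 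Hence the whole $5$-tuple meets the terminal conditions defining $\mathcal{A}_*$, and its value at $t_0$ belongs to $\mathcal{A}_*$.

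For the reverse inclusion, given $(t_0,\phi_0,m_0,\hat\vartheta,0)\in\mathcal{A}_*$ I would fix a witnessing trajectory with control $\nu(\cdot)$ and terminal data $\phi(T)=\sigma(m(T))$, $z(T)=0$. The endpoint identity forces $J=0$, positivity of $\hat\vartheta$ forces all $J_k=0$, and then statement~\ref{cond:th_control:zero:some} of Theorem~\ref{th:equivalence control}, with $\mu(t_0)=\hat\vartheta$ supplying the required nonzero coordinates, shows that $(\phi(\cdot),m(\cdot))$ solves the mean field game, whence $\phi_0\in\mathcal{W}(t_0,m_0)$. I expect the step requiring the most care to be the handling of the $\mu$-component: one must check that the backward construction underlying $\mathcal{A}_*$ is unobstructed (solve $m(\cdot)$ backward from any $m(T)\in\Sigma^d$, then $\phi(\cdot)$ from $\phi(T)=\sigma(m(T))$, then $\mu(\cdot)$ and $z(\cdot)$ from their terminal values) and that the homogeneity in $\mu$ legitimately lets $\hat\vartheta$ play the role of a ``$\mu_0$ with nonzero coordinates'' in Theorem~\ref{th:equivalence control}, so that only $\phi(T)=\sigma(m(T))$ and $z(T)=0$ are genuinely binding at the terminal time.
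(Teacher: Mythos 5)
Your proof is correct, and it splits against the paper's as follows. For the inclusion $\mathcal{W}(t_0,m_0)\subseteq\{\phi_0:(t_0,\phi_0,m_0,\hat\vartheta,0)\in\mathcal{A}_*\}$ you essentially reproduce the paper's argument: the paper likewise adjoins $\mu(\cdot)$ with $\mu(t_0)=\hat\vartheta$ and a function $z$ solving~(\ref{eq:z_viability}), and obtains $J=0$ from the implication \ref{cond:th_control:mfg}$\Rightarrow$\ref{cond:th_control:zero:every} of Theorem~\ref{th:equivalence control}; the only cosmetic difference is that the paper defines $z$ by the backward formula (so $z(T)=0$ holds by construction and $z(t_0)=J$) and additionally invokes the monotonicity Lemma~\ref{lm:z_nondecrease} to get $z\equiv 0$, a step your forward integration of $z$ from $z(t_0)=0$ shows to be dispensable for mere membership at $t_0$. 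The genuine divergence is in the reverse inclusion: the paper observes that $\mathcal{A}_*$ is viable by construction and that its points at time $T$ satisfy $\phi=\sigma(m)$, $z=0$, and then applies Theorem~\ref{th:viability} as a black box, whereas you inline exactly the content of that theorem's proof for the specific set $\mathcal{A}_*$ --- the endpoint identity $z(T)-z(t_0)=J$ from Lemma~\ref{lm:z_property}, followed by statement~\ref{cond:th_control:zero:some} of Theorem~\ref{th:equivalence control} with $\mu_0=\hat\vartheta$ supplying the nonzero coordinates. Your route buys self-containedness and quietly sidesteps the closedness hypothesis of Theorem~\ref{th:viability}, which the paper never verifies for $\mathcal{A}_*$ (harmless, since the witnessing trajectory is supplied by the definition of $\mathcal{A}_*$ rather than by the viability machinery, but left implicit); the paper's route buys the conceptual point that the maximal value multifunction is an instance of the general viability criterion. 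One further point in your favor: your insistence that only positivity, not normalization, of $\hat\vartheta$ matters is exactly what is needed to make the statement literally consistent, because $\hat\vartheta=(1/\sqrt{d},\ldots,1/\sqrt{d})$ has coordinate sum $\sqrt{d}$ and hence lies outside $\Sigma^d$ for $d\geq 2$, while~(\ref{eq:dynamics_mu_sum}) conserves total mass, so the boundary requirement $\mu(T)\in\Sigma^d$ in the definition of $\mathcal{A}_*$ must be read up to the rescaling you describe (cf.\ Remark~\ref{remark:choice}); the paper glosses over this.
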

\begin{remark}\label{remark:choice}
	In Theorem ~\ref{th:set_of_solutions} one can replace $\hat{\vartheta}$ with an arbitrary row-vector with nonzero coordinates.
\end{remark} The proof of Theorem \ref{th:set_of_solutions} uses the following auxiliary statement.

\begin{lemma}\label{lm:z_nondecrease}
	Assume that $(\phi(\cdot),m(\cdot),\mu(\cdot),z(\cdot),\nu(\cdot))$ satisfies~(\ref{eq:dymanics_m_control})--(\ref{eq:Bellman_control}),~(\ref{eq:z_viability}). Then, the function $t\mapsto z(t)$  is nondecreasing.
\end{lemma}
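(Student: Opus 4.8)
The plan is to establish $\frac{d}{dt}z(t)\ge 0$ pointwise, which immediately yields that $z$ is nondecreasing. First I would differentiate $z$ and recast its derivative using the dynamics of $\mu$ and $\phi$. Proceeding exactly as in the proof of Lemma~\ref{lm:z_property}, one substitutes $\dot\mu=\mu\mathcal{Q}$ from~(\ref{eq:dynamics_mu_sum}) and the Bellman equation~(\ref{eq:Bellman_control}) to express the derivative of $z$ in the compact form
\[
\frac{d}{dt}z(t)=\mu(t)\big[H(t,m(t),\phi(t))-\mathcal{Q}(t,m(t),\nu(t))\phi(t)-g(t,m(t),\nu(t))\big].
\]
Thus the whole question reduces to a sign analysis of the right-hand side, which splits into a componentwise inequality for the bracketed vector and a positivity statement for $\mu(t)$.

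The key step is a pointwise comparison coming from the variational definition of the Hamiltonian. For each state $i$, since $\nu_i(t,\cdot)$ is a probability measure on $U$, I would write
\[
\big(\mathcal{Q}(t,m,\nu)\phi\big)_i+g_i(t,m,\nu)=\int_U\Big[\sum_{j=1}^dQ_{i,j}(t,m,u)\phi_j+g(t,i,m,u)\Big]\nu_i(t,du),
\]
and bound the integrand pointwise by its maximum over $u\in U$, which is exactly $H_i(t,m,\phi)$ by~(\ref{intro:H_i}). Because $\nu_i(t,\cdot)$ has unit mass, this gives $(\mathcal{Q}\phi)_i+g_i\le H_i$ for every $i$; equivalently, the row-vector $H(t,m,\phi)-\mathcal{Q}(t,m,\nu)\phi-g(t,m,\nu)$ has nonnegative coordinates.

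It remains to verify that $\mu(t)$ has nonnegative coordinates, so that pairing it with a componentwise-nonnegative vector produces a nonnegative scalar. This is the only place where some care is warranted: $\mu(\cdot)$ solves the forward Kolmogorov equation~(\ref{eq:dynamics_mu_sum}), and the generator $\mathcal{Q}(t,m,\nu)$ has nonnegative off-diagonal entries and vanishing row sums, so its flow leaves the nonnegative orthant invariant. Consequently $\mu(t)\ge 0$ componentwise throughout the interval; in each application of the lemma $\mu$ is propagated forward from a nonnegative initial value (for instance from $\hat\vartheta$). Combining the two observations gives $\frac{d}{dt}z(t)=\mu(t)\big[H-\mathcal{Q}\phi-g\big]\ge 0$, hence $z$ is nondecreasing.

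I expect the main obstacle to be purely the bookkeeping involved in recasting $\dot z$ into the form $\mu\,[H-\mathcal{Q}\phi-g]$; once that identity is secured, the conclusion follows at once from the max-characterization of the Hamiltonian together with the invariance of the nonnegative orthant under the Kolmogorov flow.
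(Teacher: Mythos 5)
Your proof is correct, but it takes a genuinely different route from the paper's. The paper argues via dynamic programming: on each interval $[s,r]$ it regards $\phi(t)$ as the value function of the auxiliary Markov decision problem with Kolmogorov matrix $\mathcal{Q}(t,m(t),\nu(t))$ and reward~(\ref{criterion:s_r}), observes that the expected reward of the representative player who actually plays the fixed strategy $\nu(\cdot)$ from the initial distribution $\mu(s)$ equals $\mu(r)\phi(r)+\int_s^r\mu(t)g(t,m(t),\nu(t))\,dt$, and concludes from suboptimality of $\nu$ that $\mu(s)\phi(s)-\mu(r)\phi(r)-\int_s^r\mu(t)g(t,m(t),\nu(t))\,dt\geq 0$, which combined with the integral representation~(\ref{equality:z_r_s}) of Lemma~\ref{lm:z_property} gives $z(r)\geq z(s)$. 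You instead establish the pointwise (a.e.) differential inequality $\frac{d}{dt}z(t)=\mu(t)\left[H(t,m(t),\phi(t))-\mathcal{Q}(t,m(t),\nu(t))\phi(t)-g(t,m(t),\nu(t))\right]\geq 0$ from two elementary facts: each coordinate of $\mathcal{Q}\phi+g$ is an average over the probability $\nu_i(t,du)$ of the bracket maximized in~(\ref{intro:H_i}), hence dominated by $H_i$; and $\mu(t)$ stays componentwise nonnegative because $\mathcal{Q}$ is a Kolmogorov matrix, so the nonnegative orthant (indeed $\Sigma^d$, which is built into the state space of the dynamics) is forward-invariant. Your argument is the deterministic verification counterpart of the paper's comparison: it bypasses the probabilistic representation and the dynamic programming principle entirely, while the paper's phrasing preserves the game-theoretic reading and reuses machinery (Proposition~\ref{prop:equiv:probability}) already in place; both correctly identify nonnegativity of $\mu$ as the point needing care. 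One bookkeeping caveat: your formula for $\dot z$ carries $+\mu H$, whereas~(\ref{eq:z_viability}) as printed carries $-\mu H$. Your sign is the one forced by the Bellman equation $\dot\phi=-H$ together with~(\ref{equality:z_r_s}) (the displayed computation of $\frac{d}{dt}[\mu(t)\phi(t)]$ in the proof of Lemma~\ref{lm:z_property} contains the compensating sign slip), and it is the only convention under which the lemma can hold; since you derived the identity ``as in Lemma~\ref{lm:z_property}'' rather than from the literal~(\ref{eq:z_viability}), you should state explicitly that you are using this corrected sign.
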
	
\begin{proof}
	Consider the Markov decision problem on $[s,r]$ with the dynamics given by the Markov chain with the Kolmogorov matrix
	$$\mathcal{Q}(t,m(t),\nu(t)) $$ and the reward equal to
	\begin{equation}\label{criterion:s_r}
	\mathbb{E}\left[\phi_{X(r)}(r)+\int_s^rg(t,X(t),m(t),\nu(t))dt\right]. 
	\end{equation}  From dynamic programming principle it follows that that the value function of this problem at $t\in [s,r]$ is equal to $\phi(t)$.
	
	Let  $(\Omega,\mathcal{F},\{\mathcal{F}_t\}_{t\in [t_0,T]},P,X)$ be a motion of the representative player corresponding to the feedback relaxed control $\nu(\cdot)$, the flow of probabilities $m(\cdot)$ and the initial distribution at time $s$ equal to $\mu(s)$. As above, we have that the probability $P(X(t)=i)=\mu_i(t)$ and, thus, obeys~(\ref{eq:dynamics_mu_sum}).  We have that \[
	\begin{split}	\mathbb{E}\Bigl[\phi_{X(r)}(r)+&\int_s^rg(t,X(t),m(t),\nu(t))dt\Bigr]\\&= \mu(r)\phi(r)+\int_{s}^{r}\mu(t)g(t,m(t),\nu(t))dt.\end{split}\] Since $\phi(s)$ is the value at time $s$ for the Markov decision problem with the payoff given by~(\ref{criterion:s_r}) and the dynamics given by the Markov chain $\mathcal{Q}(t,m(t),\nu(t))$ on $[s,r]$, we have that
	$$\mu(s)\phi(s)-\mu(r)\phi(r)-\int_s^r\mu(t)g(t,m(t),\nu(t))dt\geq 0. $$ Combining this with~(\ref{equality:z_r_s}), we obtain that $z(r)\geq z(s) $ when $r>s$.
\end{proof}

\begin{proof}[Proof of Theorem~\ref{th:set_of_solutions}]
	Notice that $\mathcal{A}_*$ is viable with respect to equations~(\ref{eq:dymanics_m_control})--(\ref{eq:Bellman_control}),~(\ref{eq:z_viability}). Hence, by Theorem~\ref{th:viability} the mapping $(t_0,m_0)\rightarrow\{\phi_0:(t_0,\phi_0,m_0,\vartheta,0)\in\mathcal{A}_*\}$ is a value multifunctions. Hence,
	\begin{equation}\label{incl:A_subset_W}
	\{\phi_0:(t_0,\phi_0,m_0,\hat{\vartheta},0)\in\mathcal{A}_*\}\subset\mathcal{W}(t_0,m_0).
	\end{equation} Let us prove the opposite inclusion.
	
	Choose $\phi_0\in\mathcal{W}(t_0,m_0)$. This means that there exists a solution of the mean field game $(\phi(\cdot),m(\cdot))$  such that $\phi(t_0)=\phi_0$, $m(t_0)=m_0$. Let $\nu(\cdot)$ be a equilibrium relaxed feedback strategy corresponding to this solution and let $\mu(\cdot)$ solve
	(\ref{eq:dynamics_mu_sum}) with the initial condition $\mu(t_0)=\hat{\vartheta}$. Further, put
	$$z(s)\triangleq \mu(s)\phi(s)-\mu(T)\sigma(m(T))-\int_{s}^T\mu(t)g(t,m(t),\nu(t))dt. $$ Notice, that
	$z(\cdot)$ satisfies~(\ref{eq:z_viability}) and $z(t_0)=J(m(\cdot),\phi(\cdot),\mu(\cdot),\nu(\cdot))$. By Theorem~\ref{th:equivalence control}, we conclude that $z(t_0)=0$. Additionally, $z(T)=0$. Using Lemma~\ref{lm:z_nondecrease}, we obtain that $z(s)=0$ for every $s\in [t_0,T]$. This implies that $(t_0,m_0,\phi_0,\hat{\vartheta},0)\in\mathcal{A}_*$. Therefore,
	\[\mathcal{W}(t_0,m_0)\subset \{\phi_0:(t_0,\phi_0,m_0,\hat{\vartheta},0)\in\mathcal{A}_*\}.\] This together with~(\ref{incl:A_subset_W}) yields the theorem.
\end{proof}

Let us complete the section by the fact that  each solution of the master equation determines a value multifunction.

Master equation~(\ref{eq:master_usual}) relies on the  measurable feedback strategies.  However, we primary consider relaxed strategies. Thus, we relax the master equation and arrive at the following:   
\begin{equation}\label{eq:master_relaxed}
\frac{\partial}{\partial s}\Phi(s,\mu)+H(t,\mu,\Phi(s,\mu))\in - \operatorname{co}\mathcal{O}(s,\mu,\Phi(s,\mu))\cdot \frac{\partial \Phi}{\partial \mu}(s,\mu).
\end{equation} Here, 
\[\begin{split}\operatorname{co}\mathcal{O}(s,m,\phi)=\Bigl\{m&\mathcal{Q}(t,m,\nu):\\ &\mathcal{Q}(t,m,\nu)\phi+g(t,m(t),\nu)=H(t,m,\phi),\ \ \nu\in(\mathcal{P}(U))^d\Bigr\}.\end{split}\]

\begin{definition}\label{def:solution_master} We say that $\Phi:[0,T]\times\Sigma^d\rightarrow\rd$ is a smooth solution of master equation in the multivalued form (\ref{eq:master_relaxed}) if $\Phi$ is continuously differentiable and satisfies~(\ref{eq:master_relaxed}) for every $s\in [0,T]$ and $\mu\in\Sigma^d$.
\end{definition} Here the derivative w.r.t. measure is understood in the sense of formula (\ref{intro:derivative_Phi_measure}).

\begin{proposition}\label{prop:master} Let $\Phi$ be a smooth solution of  master equation in the multivalued form~(\ref{eq:master_relaxed}). Then,
	the multifunction
	\[\mathcal{V}(t_0,m_0)\triangleq \{\Phi(t_0,m_0)\}\] is a value multifunction.
\end{proposition}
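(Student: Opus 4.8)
The plan is to deduce the proposition from the viability criterion of Theorem~\ref{th:viability} by exhibiting, out of the graph of $\Phi$, a closed set that is viable for the dynamics~(\ref{eq:dymanics_m_control})--(\ref{eq:Bellman_control}),~(\ref{eq:z_viability}). Concretely, I would put
\[\mathcal{A}\triangleq\bigl\{(s,\phi,m,\mu,z)\in[0,T]\times\rd\times\Sigma^d\times\Sigma^d\times\mathbb{R}:\ \phi=\Phi(s,m),\ z=0\bigr\},\]
the graph of $\Phi$ lifted trivially in $\mu$ and pinned to $z=0$. Since $\mathcal{V}(t_0,m_0)=\{\Phi(t_0,m_0)\}$ is single-valued, it then suffices to check the three hypotheses of Theorem~\ref{th:viability} for this $\mathcal{A}$.

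Two of these are immediate. First, $\mathcal{A}$ is closed because $\Phi$ is continuous. Second, recalling that a solution of the master equation carries the terminal condition $\Phi(T,\cdot)=\sigma(\cdot)$, every $(T,\phi,m,\mu,z)\in\mathcal{A}$ satisfies $\phi=\Phi(T,m)=\sigma(m)$ and $z=0$, as required. Third, for $\phi_0\in\mathcal{V}(t_0,m_0)$, i.e.\ $\phi_0=\Phi(t_0,m_0)$, the point $(t_0,\phi_0,m_0,\mu_0,0)$ with the uniform distribution $\mu_0=(1/d,\dots,1/d)$ lies in $\mathcal{A}$, and $\mu_0$ has strictly positive coordinates.

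The substance of the argument is the viability of $\mathcal{A}$, and this is exactly what the master equation encodes. After adjoining the time variable to obtain an autonomous system, the right-hand sides of~(\ref{eq:dymanics_m_control})--(\ref{eq:Bellman_control}),~(\ref{eq:z_viability}) are affine in the control $\nu\in(\mathcal{P}(U))^d$ through $\mathcal{Q}(t,m,\nu)$ and $g(t,m,\nu)$, so the attainable velocity set is convex and compact; continuity of the data makes the associated set-valued map upper semicontinuous, hence Marchaud, and Aubin's viability theorem~\cite[Theorem~3.3.4]{Aubin} reduces viability of the closed set $\mathcal{A}$ to a pointwise tangency condition. I would verify tangency at an arbitrary $(s,\Phi(s,m),m,\mu,0)\in\mathcal{A}$ using the defining inclusion~(\ref{eq:master_relaxed}): it furnishes a control $\nu$ that is optimal, in the sense that $\mathcal{Q}(s,m,\nu)\Phi(s,m)+g(s,m,\nu)=H(s,m,\Phi(s,m))$, and for which $\partial_s\Phi(s,m)+H(s,m,\Phi(s,m))+m\mathcal{Q}(s,m,\nu)\,\partial_\mu\Phi(s,m)=0$. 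For this $\nu$ the induced velocity is tangent to $\mathcal{A}$: differentiating $\phi=\Phi(s,m)$ along the flow and invoking the last identity gives $\dot\phi=-H=\partial_s\Phi+\dot m\,\partial_\mu\Phi$, which is tangency to the graph; and differentiating the identity of Lemma~\ref{lm:z_property} together with the optimality relation yields $\dot z=0$, which is tangency to $\{z=0\}$. I would also note that the Kolmogorov velocities $m\mathcal{Q}$ and $\mu\mathcal{Q}$ have coordinates summing to zero and are inward on the faces $\{m_j=0\}$, so they keep $m,\mu$ in $\Sigma^d$ and do not obstruct the tangency. Thus the tangency condition holds throughout $\mathcal{A}$, and the viability theorem supplies, for every datum in $\mathcal{A}$ and every $r>s$, a trajectory remaining in $\mathcal{A}$; that is, $\mathcal{A}$ is viable in the sense of Definition~\ref{def:viability}.

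With $\mathcal{A}$ closed, viable, and meeting the two boundary hypotheses, Theorem~\ref{th:viability} yields that $\mathcal{V}$ is a value multifunction. The main obstacle is the tangency verification: its real content is that the multivalued master equation says precisely that the Bellman velocity $-H$ is realized, along the graph of $\Phi$, by some control that is simultaneously optimal; optimality is then what freezes the cost coordinate $z$ at $0$ through Lemma~\ref{lm:z_property}, so that the same velocity is tangent to $\mathcal{A}$ in every component. The remaining points---the Marchaud property and the invariance of the simplex---are routine consequences of the affine dependence on $\nu$ and of the Kolmogorov structure.
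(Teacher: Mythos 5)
Your proposal is correct in substance, but it takes a genuinely different route from the paper's. The paper does not invoke Theorem~\ref{th:viability} here at all: it works directly in the simplex, defining $\mathcal{O}^\sharp(s,\mu)$ as the set of velocities $\xi\in\operatorname{co}\mathcal{O}(s,\mu,\Phi(s,\mu))$ with $-\xi\,\partial\Phi/\partial\mu=\partial_s\Phi+H$, solving the $d$-dimensional differential inclusion $\frac{d}{dt}m(t)\in\mathcal{O}^\sharp(t,m(t))$, extracting a measurable relaxed control $\hat\nu(\cdot)$ by Filippov's implicit function theorem, and then checking by the chain rule that $\phi(t)\triangleq\Phi(t,m(t))$ solves the Bellman equation while $\hat\nu$ is optimal---i.e., it verifies Definition~\ref{def:control_process} directly, never touching the coordinates $\mu$, $z$ or the functional $J$. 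You instead reduce the proposition to the paper's own sufficient condition by showing that the graph of $\Phi$ pinned at $z=0$ is viable; your tangency check is powered by exactly the same content of the master equation, namely a relaxed control that simultaneously realizes the graph derivative (tangency in $\phi$) and satisfies the vector identity $\mathcal{Q}\Phi+g=H$ (which, through Lemma~\ref{lm:z_property}, gives $\dot z=0$ for \emph{every} $\mu$, so the unconstrained $\mu$-fiber causes no trouble). Your route is more modular and conceptually attractive---it literally exhibits the graph of a master-equation solution as a viable set, tying Proposition~\ref{prop:master} to the value-multifunction machinery---at the cost of dragging the auxiliary coordinates $\mu,z$ and the Marchaud/tangency apparatus through the argument; the paper's construction is leaner, lower-dimensional, and produces the equilibrium strategy explicitly. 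One step you should make explicit: Definition~\ref{def:viability} demands an actual relaxed control $\nu(\cdot)$, whereas Aubin's theorem delivers only a trajectory of the differential inclusion; closing that gap requires the same Filippov measurable-selection argument the paper uses, applied to the affine parametrization $\nu\mapsto\bigl(m\mathcal{Q}(t,m,\nu),\mu\mathcal{Q}(t,m,\nu),g(t,m,\nu)\bigr)$. Given (C1)--(C3) this is routine, so it is a presentational omission rather than a mathematical gap.
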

\begin{proof}
	Choose $t_0\in [0,T]$, $m_0\in\Sigma^d$. We shall prove that there exists a solution of the mean field game $(\phi(\cdot),m(\cdot))$ such that $\phi(t_0)=\Phi(t_0,m_0)$, $m(t_0)=m_0$.	
	
	Let $\mathcal{O}^\sharp(s,\mu)$ be the set of row-vectors $\xi\in \operatorname{co}\mathcal{O}(s,\mu,\Phi(s,\mu))$ such that 
	\begin{equation}\label{equality:coice_xi}
	-\xi\frac{\partial\Phi(s,\mu)}{\partial \mu}=\frac{\partial}{\partial s}\Phi(s,\mu)+H(t,\mu,\Phi(s,\mu)).
	\end{equation} Obviously, $\mathcal{O}^\sharp(s,\mu)$ is convex. The continuity of the function $\Phi$ and its derivatives yields that the mapping $[0,T]\times \Sigma^d\mapsto \mapsto\mathcal{O}^\sharp(s,\mu)\subset \mathbb{R}^{d*}$ is upper semicontinuous. In particular, for each $(s,\mu)$, the set $\mathcal{O}^\sharp(s,\mu)$ is compact.
	
	Further, let $m(\cdot)$ solve the differential inclusion
	\begin{equation}\label{incl:Phi}
	\frac{d}{dt}m(t)\in \mathcal{O}^\sharp(t,m(t)),\ \ m(t_0)=m_0.
	\end{equation}
	
	Consider the multivalued mapping \[
	\begin{split}
	t\multimap \Bigl\{\nu\in(\mathcal{P}(U))^d: \frac{d}{dt}&m(t)=m(t)\mathcal{Q}(t,m(t),\nu),\\ \mathcal{Q}(t,m&(t),\nu)\Phi(t,m(t))+g(t,m(t),\nu)=H(t,m(t),\Phi(t,m(t))) \Bigr\}.\end{split}\] By \cite[\S 18.17, Filippov’s Implicit Function Theorem]{Infinite_dimensional_analysis}, this mapping admits a measurable selector. Denote it by $\hat{\nu}(\cdot)$. Thus,
	\begin{equation}\label{eq:master:Kolmogorov}
	\frac{d}{dt}m(t)=m(t)\mathcal{Q}(t,m(t),\hat{\nu}(t)),\ \ m(t_0)=m_0.
	\end{equation}
	\begin{equation}\label{eq:master:optimal}
	\mathcal{Q}(t,m(t),\hat{\nu}(t))\Phi(t,m(t))+g(t,m(t),\hat{\nu}(t))=H(t,m(t),\Phi(t,m(t)))
	\end{equation}
	Furthermore,~(\ref{equality:coice_xi}) and~(\ref{incl:Phi}) yield the equality
	\begin{equation}\label{eq:master:full_derivative}
	-\left(\frac{d}{dt}m(t)\right)\frac{\partial\Phi(s,\mu)}{\partial \mu}\Bigr|_{s=t,\mu=m(t)}=\frac{\partial}{\partial s}\Phi(s,\mu)\Bigr|_{s=t,\mu=m(t)}+H(t,m,\Phi(s,m)).
	\end{equation}
	
	Now, set $\phi(t)\triangleq \Phi(t,m(t))$. Due to~(\ref{eq:master:full_derivative}), we have that
	it satisfies the Bellman equation 
	\begin{equation}\label{eq:master:Bellman}
	\frac{d}{dt}\phi(t)=-H(t,m(t),\phi(t))
	\end{equation} and the boundary conditions
	\begin{equation}\label{eq:cond:master_Bellman}
	\phi(T)=\Phi(T,m(T))=\sigma(m(T)),\ \ \phi(t_0)=\Phi(t_0,m(t_0)). 
	\end{equation}
	The fact that $\hat{\nu}(\cdot)$ is an optimal control for the representative player directly follows from~(\ref{eq:master:optimal}). Combining this with~(\ref{eq:master:Kolmogorov}),~(\ref{eq:master:Bellman}) and~(\ref{eq:cond:master_Bellman}), we conclude that the pair $(\phi(\cdot),m(\cdot))$ is a solution of the mean field game such that $\phi(t_0)=\Phi(t_0,m(t_0))$, $m(t_0)=m_0$. Since we choose  $(t_0,m_0)$  arbitrarily, the multifunction $\mathcal{V}$ is the value mutifunction.
	
\end{proof}

\section*{Appendix. Existence of the solution of the finite state mean field game}\label{sect:appendix}
\setcounter{theorem}{0}
\renewcommand{\thetheorem}{A.\arabic{theorem}}

Below we give the proof of Theorem \ref{th:existence}. It relies on fixed point arguments and the notion of control measures. Notice that the approach based on control measures is equivalent to one involving relaxed feedback strategies. Simultaneously, the set of control measures is compact. This allows to use the fixed point technique. 

Throughout the Appendix we assume that the initial time $t_0\in [0,T]$ and the initial distribution of players $m_0=(m_{0,1},\ldots,m_{0,d})\in \Sigma^d$ are fixed. The notion of the control measures is introduced as follows.

Let $\mathcal{U}$ denote the set of measures $\alpha$ on $[t_0,T]\times U$ compatible with the Lebesgue measure, i.e.,  for any Borel set $\Gamma\subset [t_0,T]$,
\[\alpha(\Gamma\times U)=\lambda(\Gamma), \] where $\lambda$ denotes the Lebesgue measure on $[t_0,T]$.  Within the control theory, elements of $\mathcal{U}$ are often called control measures. The meaning of this term is explained below. We endow the set $\mathcal{U}$ with the topology of narrow convergence, i.e., the sequence $\{\alpha^n\}_{n=1}^\infty\subset \mathcal{U}$ converges to $\alpha\in\mathcal{U}$ iff, for every $f\in C([t_0,T]\times U)$,
\[\int_{[t_0,T]\times U}f(t,u)\alpha^n(d(t,u))\rightarrow \int_{[0,T]\times U}f(t,u)\alpha(d(t,u))\text{ as }n\rightarrow\infty.\] Notice that $\mathcal{U}$ can be regarded as the compact convex subset of the set of all charges on $[t_0,T]\times U$ that is a Banach space.

The link between   elements of $\mathcal{U}$   and weakly measurable functions  is straightforward. If $\nu:[t_0,T]\rightarrow\mathcal{P}(U)$ be a weakly measurable function, then the corresponding measure $\alpha$ is defined by the rule: for $f\in C([t_0,T]\times U)$,
\begin{equation}\label{equality:measure_def}
\int_{[t_0,T]\times U}f(t,u)\alpha(d(t,u))\triangleq\int_{t_0}^T\int_{U}f(t,u)\nu(t,du)dt.
\end{equation} Conversely, given $\alpha\in\mathcal{U}$ by the disintegration theorem (see \cite[78-111]{Dellacherie_Meyer}) there exists a weakly measurable function $\nu$ such that (\ref{equality:measure_def}) holds true. This disintegration is unique almost everywhere, i.e., if $\nu'$ and $\nu'$ are two weakly measurable functions satisfying (\ref{equality:measure_def}), then 
\[\nu'(t,\cdot)=\nu''(t,\cdot)\text{ a.e.}\] Below, we denote the disintegration of the measure $\alpha\in\mathcal{U}$ by $\alpha(\cdot|t)$. 

Using the disintegration, one can give the meaning of the control measures. The feedback formalization implies that, for each state $i$, we choose a control measure $\alpha_i$, while the player occupying the state $i$ at time $s$ shares his/her controls according to $\alpha_i(du|s)$, i.e., the disintegration of the control measure gives the feedback strategy. Thus, the Kolmogorov equation  can be rewritten as follows: \begin{equation}\label{eq:Kolmogrov_measure}
\frac{d}{dt}m_j(t)=m_{0,j}+\int_{[t_0,t]\times U}\sum_{i=1}^dm_i(\tau)Q_{i,j}(\tau,m(\tau),u)\alpha_i(d(\tau,u)).
\end{equation} Here $m_0=(m_{0,1},\ldots,m_{0,d})$ is the initial distribution. If $\alpha=(\alpha_i)_{i=1}^d\in\mathcal{U}^d$ is a sequence of control measures, then we denote the solution of (\ref{eq:Kolmogrov_measure})  by 
$m[\cdot,\alpha]$.

Now, let $\phi[\cdot,\alpha]$ stand for the solution of the Hamiltion-Jacobi equation with the $m(\cdot)=m[\cdot,\alpha]$, i.e.,
\begin{equation}\label{intro:phi_alpha}\frac{d}{dt}\phi[t,\alpha]=-H(t,m[t,\alpha],\phi[t,\alpha]),\ \ \phi[T,\alpha]=\sigma(m[T,\alpha]).\end{equation} Further, if $t\in [t_0,T]$, $m\in \Sigma^d$, $\phi\in\rd$, then denote by $\Xi_i(t,m,\phi)$ the set of elements $u\in U$ maximizing the quantity 
\[\left[\sum_{j=1}^dQ_{i,j}(t,m,u)\phi_j+g(t,i,m,u)\right],\] i.e., from the definition of $H_i$ (see (\ref{intro:H_i})) it follows that, if $u\in\Xi_i(t,m,\phi)$, then
\[\left[\sum_{j=1}^dQ_{i,j}(t,m,u)\phi_j+g(t,i,m,u)\right]=H_i(t,m,\phi).\] Since $U$ is a metric compact, the set $\Xi_i(t,m,\phi)$ is nonempty for each $t$, $m$ and $\phi$. Moreover, if $m(\cdot)$ and $\phi(\cdot)$ are continuous function, then the dependence 
\[t\mapsto\Xi_i(t,m(t),\phi(t))\] is upper semicontinuous. 

When $m(\cdot)=m[\cdot,\alpha]$ and $\phi(\cdot)=\phi[\cdot,\alpha]$, we denote the graph of the mapping
\[t\mapsto \Xi_i(t,m[t,\alpha],\phi[t,\alpha]),\] i.e., we
  set
\begin{equation}\label{intro:graph_K_i}\begin{split}
\mathcal{K}_i[\alpha]\triangleq \Bigl\{(t,u^*)\in &[t_0,T]\times U:\\\sum_{j=1}^d&Q_{i,j}(t,m[t,\alpha],u^*)\phi_j[t,\alpha]+g(t,i,m[t,\alpha],u^*)\\=&\max_{u\in U}\bigl[\sum_{j=1}^dQ_{i,j}(t,m[t,\alpha],u)\phi_j[t,\alpha]+g(t,i,m[t,\alpha],u)\bigr]\Bigr\}.\end{split}\end{equation}

Using this, we can reformulate the definition of the solution of the finite state mean field game in the term of control measures. 
\begin{proposition}\label{prop:equivalence}
	A pair $(\phi(\cdot),m(\cdot))$ is a solution of the mean field game iff there exists a  $\hat{\alpha}=(\hat{\alpha}_1,\ldots,\hat{\alpha}_d)\in\mathcal{U}^d$ such that
	\begin{enumerate}
		\item $m(\cdot)=m[\cdot,\hat{\alpha}]$, $\phi(\cdot)=\phi[\cdot,\hat{\alpha}]$;
		\item for each $i=1,\ldots,d$, $\operatorname{supp}(\hat{\alpha}_i)\subset\mathcal{K}_i[\hat{\alpha}]$.
	\end{enumerate}
\end{proposition}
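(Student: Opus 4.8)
The statement to prove is Proposition~\ref{prop:equivalence}, which reformulates the definition of a mean field game solution (Definition~\ref{def:control_process}) in the language of control measures. The plan is to unpack both directions of the ``iff'' by carefully translating between the relaxed feedback strategy $\hat{\nu}(\cdot)$ appearing in Definition~\ref{def:control_process} and the tuple of control measures $\hat{\alpha} = (\hat{\alpha}_1,\ldots,\hat{\alpha}_d) \in \mathcal{U}^d$. The central bridge is the disintegration correspondence recorded in the Appendix via formula~(\ref{equality:measure_def}): a weakly measurable $\nu_i:[t_0,T]\to\mathcal{P}(U)$ determines $\alpha_i\in\mathcal{U}$ and, conversely, every $\alpha_i\in\mathcal{U}$ disintegrates (a.e.\ uniquely) as $\alpha_i(du|t)$. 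I would first fix this identification $\hat{\nu}_i(t,du) = \hat{\alpha}_i(du|t)$ and then verify that each of the three defining conditions of Definition~\ref{def:control_process} matches a corresponding condition in the proposition.

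\medskip

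\emph{First} I would handle the two dynamical conditions. Condition~(1) of Definition~\ref{def:control_process}, the Kolmogorov equation~(\ref{eq:motion_relaxed_star}), is exactly the integrated form~(\ref{eq:Kolmogrov_measure}) once one substitutes $\mathcal{Q}_{i,j}(t,m,\hat{\nu}(t)) = \int_U Q_{i,j}(t,m,u)\hat{\alpha}_i(du|t)$, so $m(\cdot)=m[\cdot,\hat{\alpha}]$ is just the assertion that $m(\cdot)$ solves~(\ref{eq:Kolmogrov_measure}). Similarly, condition~(2) of Definition~\ref{def:control_process}, the Bellman equation~(\ref{eq:HJ_def}) driven by this same $m(\cdot)$, is precisely the defining equation~(\ref{intro:phi_alpha}) for $\phi[\cdot,\hat{\alpha}]$. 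Since the Hamiltonian $H$ in~(\ref{eq:HJ_def}) does not depend on the control at all (it is already the pointwise maximum), the identity $\phi(\cdot)=\phi[\cdot,\hat{\alpha}]$ follows from uniqueness of solutions to the terminal value problem~(\ref{intro:phi_alpha}). Thus the first listed condition of the proposition is equivalent to conditions~(1) and~(2) of the definition.

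\medskip

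\emph{Second}, and this is where the real content lies, I would show that the optimality/support condition~(2) of the proposition, namely $\operatorname{supp}(\hat{\alpha}_i)\subset\mathcal{K}_i[\hat{\alpha}]$, is equivalent to the Argmax inclusion~(\ref{incl:hat_nu}) holding for a.e.\ $t$. The key observation is that for each fixed $i$, the integrand $\sum_j Q_{i,j}(t,m,u)\phi_j + g(t,i,m,u)$ is bounded above by $H_i(t,m,\phi)$ pointwise in $u$ (by the definition~(\ref{intro:H_i}) of $H_i$), and that $\hat{\nu}_i(t)=\hat{\alpha}_i(\cdot|t)$ achieves the maximum $H_i$ in~(\ref{incl:hat_nu}) if and only if the nonnegative deficit function $u\mapsto H_i(t,m,\phi)-[\sum_j Q_{i,j}\phi_j + g]$ integrates to zero against $\hat{\alpha}_i(du|t)$. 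Because this deficit is nonnegative and continuous in $u$, integrating to zero forces $\hat{\alpha}_i(\cdot|t)$ to be concentrated on the zero set $\Xi_i(t,m(t),\phi(t))$ for a.e.\ $t$, which is exactly the fibre of $\mathcal{K}_i[\hat{\alpha}]$ over $t$. Translating this fibrewise support statement into the global support statement $\operatorname{supp}(\hat{\alpha}_i)\subset\mathcal{K}_i[\hat{\alpha}]$ uses that $\mathcal{K}_i[\hat{\alpha}]$ is, by construction~(\ref{intro:graph_K_i}), the graph of the upper semicontinuous (hence closed-graph) multifunction $t\mapsto\Xi_i(t,m[t,\alpha],\phi[t,\alpha])$, so the a.e.\ fibrewise concentration promotes to containment of the topological support.

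\medskip

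The main obstacle I anticipate is precisely this last promotion from ``a.e.\ $t$'' statements about the disintegration to a genuine topological support statement about $\hat{\alpha}_i$ as a measure on $[t_0,T]\times U$, since $\operatorname{supp}$ is a closed set and the naive union of fibres need not be closed. The resolution should exploit that $\mathcal{K}_i[\hat{\alpha}]$ is already closed (being the graph of an upper semicontinuous multifunction with compact values over a compact base, as noted just before~(\ref{intro:graph_K_i})), together with the fact that $\hat{\alpha}_i$ projects to Lebesgue measure on $[t_0,T]$; then the a.e.-concentration of the disintegration on the fibres of the closed set $\mathcal{K}_i[\hat{\alpha}]$ is equivalent to $\hat{\alpha}_i(([t_0,T]\times U)\setminus\mathcal{K}_i[\hat{\alpha}])=0$, which for a closed set is in turn equivalent to $\operatorname{supp}(\hat{\alpha}_i)\subset\mathcal{K}_i[\hat{\alpha}]$. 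Assembling the two directions and invoking Proposition~\ref{prop:equiv:probability} (or directly the dynamic programming principle) to identify~(\ref{incl:hat_nu}) with optimality completes the equivalence.
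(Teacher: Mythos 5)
Your proposal is correct and follows essentially the same route as the paper: both translate strategies into control measures via the disintegration formula~(\ref{equality:measure_def}), identify the dynamical conditions~(\ref{eq:motion_relaxed_star}),~(\ref{eq:HJ_def}) with $m[\cdot,\hat{\alpha}]$, $\phi[\cdot,\hat{\alpha}]$, and convert the Argmax condition~(\ref{incl:hat_nu}) into the statement that the nonnegative, continuous deficit $H_i-\bigl[\sum_j Q_{i,j}\phi_j+g\bigr]$ has zero integral against $\hat{\alpha}_i$, forcing concentration on its (closed) zero set $\mathcal{K}_i[\hat{\alpha}]$. The only cosmetic difference is that the paper integrates the deficit globally over $[t_0,T]\times U$ (equation~(\ref{equality:alpha_H_i})) and concludes support containment from sign-definiteness plus continuity, whereas you argue fibrewise and then promote via closedness of $\mathcal{K}_i[\hat{\alpha}]$ and Fubini; these are interchangeable forms of the same argument.
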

\begin{proof}
	First, assume that $\phi(\cdot)$, $m(\cdot)$ is the solution of the finite state mean field game in the sense of Definition \ref{def:control_process}. Let $\hat{\nu}=(\hat{\nu}_1,\ldots,\hat{\nu}_d)$ be the corresponding equilibrium strategy. For each $\hat{\nu}_i$, define the control measures $\hat{\alpha}_i$ by (\ref{equality:measure_def}). Thanks to (\ref{eq:Kolmogrov_measure}), we have that
	\[m(\cdot)=m[\cdot,\hat{\alpha}],\ \ \phi(\cdot)=\phi[\cdot,\hat{\alpha}].\] Inclusion (\ref{incl:hat_nu}) and equality (\ref{intro:H_i}) imply that, for a.e. $t\in [t_0,T]$,
	\begin{equation}\label{equality:nu_H_i}\begin{split}
	\int_U\Bigl[\sum_{j=1}^dQ_{i,j}(t,m[t,\hat{\alpha}],u)&\phi_j[t,\hat{\alpha}]+g(t,i,m[t,\hat{\alpha}],u)\\&-H_i(t,m[t,\hat{\alpha}],u)\Bigr]\hat{\nu}_i(t,du)=0.\end{split}\end{equation}
	Integrating this w.r.t. time variable and using (\ref{equality:measure_def}), we obtain that
		\begin{equation}\label{equality:alpha_H_i}\begin{split}
	\int_{[t_0,T]\times U}\Bigl[\sum_{j=1}^dQ_{i,j}(t,m[t,\hat{\alpha}],u)&\phi_j[t,\hat{\alpha}]+g(t,i,m[t,\hat{\alpha}],u)\\&-H_i(t,m[t,\hat{\alpha}],u)\Bigr]\hat{\alpha}(d(t,u))=0.\end{split}\end{equation} This, the definitions of the set $\mathcal{K}_i$ (see (\ref{intro:graph_K_i})) and the function $H_i$ (see (\ref{intro:H_i})) yield that
	\[\operatorname{supp}(\hat{\alpha}_i)\subset\mathcal{K}_i[\hat{\alpha}].\] Therefore, each solution of the finite state mean field type satisfies the properties of this proposition. 
	
	Conversely, assume that $\phi(\cdot)$, $m(\cdot)$ and $\hat{\alpha}=(\hat{\alpha}_1,\ldots,\hat{\alpha}_d)$ satisfy properties 1,2 of the proposition. We define the relaxed controls $\hat{\nu}_i$ to be equal to the disintegration of $\hat{\alpha}_i$. Hence, $\phi(\cdot)$ and $m(\cdot)$ satisfy equations (\ref{eq:motion_relaxed_star}), (\ref{eq:HJ_def}). 
	
	Further, due to (\ref{intro:graph_K_i}), inclusion
	\[\operatorname{supp}(\hat{\alpha}_i)\subset\mathcal{K}_i[\hat{\alpha}]\] implies (\ref{equality:alpha_H_i}). Since $\hat{\nu}_i$ is a disintegration of $\hat{\alpha}_i$, we conclude that (\ref{equality:nu_H_i}) holds true for a.e. $t\in [t_0,T]$. This and definition of $H_i$ (see (\ref{intro:H_i})) give inclusion  (\ref{incl:hat_nu}).
\end{proof}

In the light of Proposition \ref{prop:equivalence}, we reduce the existence theorem for the finite state mean field game to the fixed point problem for the  multivalued mapping $\Phi$ that assigns to $\alpha\in\mathcal{U}^d$ the set of sequence of measures $\beta=(\beta_1,\ldots,\beta_d)\in \mathcal{U}^d$ such that
\[\operatorname{supp}(\beta_i)\subset\mathcal{K}_i[\alpha],\ \ i=1,\ldots,d.\] To apply the fixed point theorem, we prove that $\Phi$ has compact and convex values and is upper semicontinuous.

Since the set $\mathcal{K}_i[\alpha]$ is compact, we have that the set of measures $\beta_i\in\mathcal{U}$ those are supported on $\mathcal{K}_i[\alpha]$ is compact and convex. To prove the upper semicontinuity of the mapping $\Phi$, we, first, prove that $m[\cdot,\alpha]$ and $\phi[\cdot,\alpha]$ depend on $\alpha$ continuously, then apply the fact that the mapping assigning to the  functions $m(\cdot)$, $\phi(\cdot)$ the graph of $\Xi_i(\cdot,m(\cdot),\phi(\cdot))$ is upper semicontinuous.

In the following, we say the sequence $\{\alpha^n\}_{n=1}^\infty\subset \mathcal{U}^d$ converges to $\alpha\in\mathcal{U}^d$ provided that each sequence of measures $\{\alpha_i^n\}_{n=1}^\infty$ converges narrowly to $\alpha_i$, where $\alpha^n=(\alpha_1^n,\ldots,\alpha_d^n)$, $\alpha=(\alpha_1,\ldots,\alpha_d)$.

\begin{lemma}\label{lm:m_continuous} If $\{\alpha^n\}\subset\mathcal{U}^d$ converges to $\alpha\in\mathcal{U}^d$, then  $\{m[\cdot,\alpha^n]\}_{n=1}^\infty$ converges to $m[\cdot,\alpha^*]$ in $C([t_0,T],\Sigma^d)$.
\end{lemma}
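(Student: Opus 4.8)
The plan is to prove the continuous dependence of the flow $m[\cdot,\alpha]$ on the control measure $\alpha$ by combining a compactness argument (the trajectories live in a fixed compact convex set and are uniformly Lipschitz) with identification of the limit via the narrow convergence. Concretely, I would work from the integral form~(\ref{eq:Kolmogrov_measure}) rather than the differential form, since the integral form is better suited to passing limits under measures that converge only narrowly.

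First I would establish \emph{uniform bounds and equicontinuity}. Since each $m[t,\alpha^n]\in\Sigma^d$, the trajectories are uniformly bounded. The boundedness of $Q_{i,j}$ on the compact set $[t_0,T]\times\Sigma^d\times U$ (guaranteed by (C1),(C2)) gives a uniform bound $L$ on the right-hand side of~(\ref{eq:Kolmogrov_measure}), so that for all $n$ and all $s\le t$,
\[\|m[t,\alpha^n]-m[s,\alpha^n]\|\le L\,(t-s).\]
Thus $\{m[\cdot,\alpha^n]\}$ is uniformly bounded and uniformly Lipschitz, hence by the Arzel\`a--Ascoli theorem it is relatively compact in $C([t_0,T],\Sigma^d)$. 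It therefore suffices to show that every convergent subsequence has the unique limit $m[\cdot,\alpha]$; since $\Sigma^d$ is closed the limit stays in $\Sigma^d$.

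Next I would \emph{identify the limit}. Passing to a subsequence, assume $m[\cdot,\alpha^n]\to m^*(\cdot)$ uniformly. The goal is to show $m^*(\cdot)$ satisfies~(\ref{eq:Kolmogrov_measure}) with the limiting measure $\alpha$; uniqueness of the solution (which follows from the Lipschitz dependence of $Q_{i,j}$ on $m$ in (C3) via Gr\"onwall) then forces $m^*(\cdot)=m[\cdot,\alpha]$, and since the limit is independent of the subsequence the whole sequence converges. To pass to the limit in the integral
\[\int_{[t_0,t]\times U}\sum_{i=1}^d m_i[\tau,\alpha^n]\,Q_{i,j}(\tau,m[\tau,\alpha^n],u)\,\alpha^n_i(d(\tau,u)),\]
I would replace the integrand by its limiting form and control two sources of error simultaneously: the change of integrand (from the $\alpha^n$-trajectory to $m^*$) and the change of measure (from $\alpha^n_i$ to $\alpha_i$).

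The main obstacle is precisely this \textbf{double limit}: the integrand itself depends on $n$ through $m[\cdot,\alpha^n]$, while the measure $\alpha^n_i$ also varies with $n$, and narrow convergence only guarantees convergence of integrals of a \emph{fixed} continuous test function. I would handle it by a standard splitting. Write $f_n(\tau,u)\triangleq\sum_i m_i[\tau,\alpha^n]Q_{i,j}(\tau,m[\tau,\alpha^n],u)$ and $f(\tau,u)\triangleq\sum_i m^*_i(\tau)Q_{i,j}(\tau,m^*(\tau),u)$. By the uniform convergence $m[\cdot,\alpha^n]\to m^*$ together with the uniform continuity of $Q_{i,j}$ on the compact domain (C2), one gets $\sup_{\tau,u}|f_n(\tau,u)-f(\tau,u)|\to 0$; the difference of the two integrals of $f_n$ and $f$ against $\alpha^n_i$ is then bounded by this sup times $\lambda([t_0,t])\le T-t_0$, hence vanishes. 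For the remaining term, $f$ is a \emph{fixed} continuous function, so $\int f\,\alpha^n_i\to\int f\,\alpha_i$ directly by the definition of narrow convergence. (One extra routine point: the domain of integration $[t_0,t]\times U$ has a time cutoff, which I would absorb either by noting the boundary $\{t\}\times U$ is $\alpha_i$-null or by testing against $f$ multiplied by a continuous approximation of $\mathbf 1_{[t_0,t]}$ and letting the approximation tighten.) Combining the two estimates yields that $m^*(\cdot)$ solves~(\ref{eq:Kolmogrov_measure}) for $\alpha$, completing the argument.
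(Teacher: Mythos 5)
Your proof is correct, but its architecture differs from the paper's. The paper never invokes compactness: it compares $m[\cdot,\alpha^n]$ \emph{directly} to the known solution $m[\cdot,\alpha^*]$, splits the difference of the integrals into an integrand-change term (bounded via the Lipschitz dependence of $Q_{i,j}$ on $m$ by $C_2\|m^n(\tau)-m^*(\tau)\|$) and a measure-change term, handles the latter by an explicit piecewise-linear-in-time approximation $f^L_{i,j,l}$ of the cut-off integrand on a grid of mesh $T/L$ (the same device you mention as your second option for the time cutoff), and then closes the estimate with Gr\"onwall's inequality, obtaining the quantitative bound $\|m^n(t)-m^*(t)\|\le C_4/L$ for all $n\ge N(L)$. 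You instead run a soft argument: uniform Lipschitz bounds plus Arzel\`a--Ascoli give relative compactness, the same splitting (with uniform continuity of $Q$ replacing Lipschitzness, and the portmanteau theorem --- the set $\{t\}\times U$ is $\alpha_i$-null because the time marginal is Lebesgue --- replacing the grid approximation) identifies any subsequential limit as a solution of~(\ref{eq:Kolmogrov_measure}) for $\alpha$, and uniqueness (Gr\"onwall, now used only here) upgrades subsequential to full convergence. The trade-off: the paper's route is self-contained and yields an explicit rate linking the speed of convergence of $m^n$ to that of the narrow convergence, while yours is more modular --- Lipschitzness enters only through uniqueness, the limit passage needs only continuity of $Q$ in $m$, and the null-boundary observation disposes of the cutoff issue with no bookkeeping --- at the cost of invoking Arzel\`a--Ascoli and the mapping theorem for narrow convergence.
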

\begin{proof} To simplify designation set
	\[m^n(\cdot)\triangleq m[\cdot,\alpha^n],\ \ m^*(\cdot)\triangleq m[\cdot,\alpha^*].\]
	 Recall that $m^n(t)$ is a vector with coordinates $(m_1^n(t),\ldots,m_d^n(t))$, while 
	$m^*(t)=(m^*_1(t),\ldots,m^*_d(t))$. 
	We have that
	\begin{equation}\label{ineq:A_m_n_star}\begin{split}
	 |m_j^n(t)-m^*_j(t)&|\\=\Bigl|\int_{[t_0,t]\times U}&\sum_{i=1}^dm_i^n(\tau)Q_{i,j}(\tau,m^n(\tau),u)\alpha_i^n(d(\tau,u))\\&-\int_{[t_0,t]\times U}\sum_{i=1}^d m_i^*(\tau)Q_{i,j}(\tau,m^*(\tau),u)\alpha_i^*(d(\tau,u))\Bigr|
	 \\\leq \int_{[t_0,t]\times U}&\sum_{i=1}^d|m_i^n(\tau)Q_{i,j}(\tau,m^n(\tau),u)-m_i^*(\tau)Q_{i,j}(\tau,m^*(\tau),u)|\alpha_i^n(d(\tau,u))\\&+\Bigl|
	 \int_{[t_0,t]\times U}\sum_{i=1}^dm_i^*(\tau)Q_{i,j}(\tau,m^*(\tau),u)\alpha_i^n(d(\tau,u))
	 \\ &\hspace{40pt}-
	 \int_{[t_0,t]\times U}\sum_{i=1}^dm_i^*(\tau)Q_{i,j}(\tau,m^*(\tau),u)\alpha_i^*(d(\tau,u))\Bigr|.
	\end{split}
	\end{equation}
	
	Notice that
	\begin{equation}\label{ineq:A:lipschitz}\begin{split}
	\sum_{i=1}^d|&m_i^n(t)Q_{i,j}(t,m^n(t),u)-m_i^*(t)Q_{i,j}(t,m^*(t),u)|\\ &\leq 
	C_0\sum_{i=1}^d|m_i^n(t)-m_i^*(t)|+dC_1\|m^n(t)-m^*(t)\|\leq
	C_2\|m^n(t)-m^*(t)\|.
	\end{split}
	\end{equation} Here $C_0$ is the upper bound of $|Q_{i,j}(t,m,u)|$, $C_1$ is the common Lipschitz contant for the functions $m\mapsto Q_{i,j}(t,m,u)$, while $C_2\triangleq\sqrt{d}C_0+dC_1$.
	
	Further, let $L$ be a positive number. Set
	\[\tau_l^L\triangleq Tl/L.\]
	For each $i,j\in \{1,\ldots,d\}$, $l\in \{0,\ldots,L-1\}$, consider the function $f^L_{i,j,l}:[t_0,T]\times U \mapsto \mathbb{R}$ such that
	\begin{itemize}
		\item $f_{i,j,l}^L(t,u)=m_i^*(t)Q_{i,j}(t,m^*(t),u)$ if $t\in [0,\tau_l^L]$;
		\item $f_{i,j,l}^L(t,u)=0$ if $t\in [\tau^L_{l+1},T]$;
		\item the function $t\mapsto f_{i,j,l}^L(t,u)$ is linear on $[\tau_l^L,\tau_{l+1}^L]$.
	\end{itemize} Notice that, for every $t\in [\tau_l^L,\tau_{l+1}^L]$ and each $\alpha\in\mathcal{U}$, 
	\begin{equation}\label{ineq:A:f_ikl_m}
	\begin{split}
	\Bigl|\int_{[t_0,T]\times U}f_{i,j,l}^L&(t,u)\alpha(d(t,u))\\&-\int_{[t_0,T]\times U} m_i^*(t)Q_{i,j}(t,m^*(t),u)\alpha(d(t,u))\Bigr|\leq C_0T/L.\end{split}
	\end{equation} Since $\{\alpha_i^n\}$ narrowly converge to $\alpha^*_i$, we have that there exists $N_L$ such that, for any $n\geq N_L$, and every $i,j\in \{1,\ldots,d\}$ and $l=\{0,\ldots,L-1\}$,
	\[\Bigl|\int_{[t_0,T]\times U}f_{i,j,l}^L(t,u)\alpha_i^n(d(t,u))-\int_{[t_0,t]\times U}f_{i,j,l}^L(t,u)\alpha_i^n(d(t,u))\Bigr|\leq 1/L.\]
	
	Plugging this, (\ref{ineq:A:lipschitz}) and (\ref{ineq:A:f_ikl_m}) into the right-hand side of (\ref{ineq:A_m_n_star}), we conclude that, for each $k=1,\ldots,d$ and $n\geq N(L)$,
	\[
	|m_j^n(t)-m^*_j(t)|\leq \int_{t_0}^T C_2 \|m^n(\tau)-m^*(\tau)\|d\tau+C_3/L.
	\] Here we denote $C_3\triangleq d(C_0T+1)$.
	Hence, if $n\geq N(L)$,
	\[\|m^n(t)-m^*(t)\|\leq C_2\sqrt{d} \int_{t_0}^T  \|m^n(\tau)-m^*(\tau)\|d\tau+\sqrt{d}C_3/L.\] Applying the Gronwall's inequality we obtain that, for $n\geq N(L)$ and  each $t\in [t_0,T]$,
	\[\|m^n(t)-m^*(t)\|\leq C_4/L,\] where $C_4$ is a constant equal to  $\sqrt{d}C_3\exp(C_2\sqrt{d}T)$. This gives the statement of the lemma.
\end{proof}

Using the continuity of the mapping $\alpha\mapsto m[\cdot,\alpha]$ we are able to derive the following.
\begin{lemma}\label{lm:Phi_upper_semi}
	The mapping $\Phi$ is upper semicontinuous, i.e., if $\{\alpha^n\}_{n=1}^\infty\subset \mathcal{U}^d$ converges to $\alpha^*\in\mathcal{U}^d$, $\{\beta^n\}_{n=1}^\infty\subset{U}^d$ is such that $\beta^n\in\Phi(\alpha^n)$ and $\{\beta^n\}_{n=1}^\infty$ converges to $\beta^*$, then $\beta^*\in\Phi(\alpha^*)$.
\end{lemma}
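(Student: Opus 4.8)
The plan is to build on the structure already in place. Lemma~\ref{lm:m_continuous} gives continuity of $\alpha\mapsto m[\cdot,\alpha]$, so I would first upgrade this to continuity of $\alpha\mapsto\phi[\cdot,\alpha]$, then recast the support inclusion $\operatorname{supp}(\beta_i)\subset\mathcal{K}_i[\alpha]$ as the vanishing of the integral of a nonnegative continuous \emph{gap function}, and finally pass to the limit in this integral identity. Throughout write $m^n(\cdot)=m[\cdot,\alpha^n]$, $m^*(\cdot)=m[\cdot,\alpha^*]$, $\phi^n(\cdot)=\phi[\cdot,\alpha^n]$, $\phi^*(\cdot)=\phi[\cdot,\alpha^*]$, and $\beta^n=(\beta^n_1,\ldots,\beta^n_d)$, $\beta^*=(\beta^*_1,\ldots,\beta^*_d)$.

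\emph{Continuity of $\phi[\cdot,\alpha]$.} Since $\phi[\cdot,\alpha]$ solves the terminal-value problem~(\ref{intro:phi_alpha}), I would write it in integral form $\phi[t,\alpha]=\sigma(m[T,\alpha])+\int_t^T H(s,m[s,\alpha],\phi[s,\alpha])\,ds$, subtract the identities for $\alpha^n$ and $\alpha^*$, and estimate
\[|\phi^n(t)-\phi^*(t)|\le|\sigma(m^n(T))-\sigma(m^*(T))|+\int_t^T\bigl(L_\phi|\phi^n(s)-\phi^*(s)|+L_m\|m^n(s)-m^*(s)\|\bigr)ds.\]
Here $H$ is Lipschitz in $\phi$ with a constant $L_\phi$ because the $Q_{i,j}$ are bounded (a maximum of affine functions of $\phi$ with bounded slopes), and Lipschitz in $m$ with a constant $L_m$ depending only on the bound of $\phi^*$, by~(C3); continuity of $\sigma$ controls the boundary term. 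Lemma~\ref{lm:m_continuous} gives $\|m^n-m^*\|\to0$ uniformly and $|\sigma(m^n(T))-\sigma(m^*(T))|\to0$, so Gronwall's inequality yields $\phi^n\to\phi^*$ uniformly on $[t_0,T]$.

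\emph{Gap function and integral reformulation.} For $\alpha\in\mathcal{U}^d$ set
\[W_i(t,u;\alpha)\triangleq H_i(t,m[t,\alpha],\phi[t,\alpha])-\sum_{j=1}^dQ_{i,j}(t,m[t,\alpha],u)\phi_j[t,\alpha]-g(t,i,m[t,\alpha],u).\]
By the definition of $H_i$ in~(\ref{intro:H_i}), the function $W_i(\cdot,\cdot;\alpha)$ is continuous and nonnegative on the compact $[t_0,T]\times U$, and its zero set is exactly $\mathcal{K}_i[\alpha]$ (see~(\ref{intro:graph_K_i})). Hence $\operatorname{supp}(\beta_i)\subset\mathcal{K}_i[\alpha]$ holds if and only if $\int_{[t_0,T]\times U}W_i(t,u;\alpha)\beta_i(d(t,u))=0$: the ``only if'' is immediate, and the ``if'' follows since a nonnegative continuous integrand with vanishing integral forces $\beta_i$ to give no mass to the open set $\{W_i(\cdot;\alpha)>0\}$, whose complement is $\mathcal{K}_i[\alpha]$. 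Moreover, the uniform convergences $m^n\to m^*$ and $\phi^n\to\phi^*$, together with the uniform continuity of $Q_{i,j}$, $g$ and $H_i$ on compacta, give $W_i(\cdot,\cdot;\alpha^n)\to W_i(\cdot,\cdot;\alpha^*)$ uniformly on $[t_0,T]\times U$. (This is the quantitative form of the upper semicontinuity of the graph map $\alpha\mapsto\mathcal{K}_i[\alpha]$ alluded to earlier.)

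\emph{Passing to the limit.} By hypothesis $\beta^n\in\Phi(\alpha^n)$, so $\int W_i(\cdot;\alpha^n)\,d\beta^n_i=0$ for each $i$, and the goal is $\int W_i(\cdot;\alpha^*)\,d\beta^*_i=0$. I expect this to be the main obstacle: both the measure $\beta^n_i$ and the integrand $W_i(\cdot;\alpha^n)$ vary, so narrow convergence or uniform convergence alone is insufficient. The remedy is the decomposition
\[\int W_i(\cdot;\alpha^*)\,d\beta^*_i=\Bigl(\int W_i(\cdot;\alpha^*)\,d\beta^*_i-\int W_i(\cdot;\alpha^*)\,d\beta^n_i\Bigr)+\int\bigl(W_i(\cdot;\alpha^*)-W_i(\cdot;\alpha^n)\bigr)\,d\beta^n_i,\]
where I have used $\int W_i(\cdot;\alpha^n)\,d\beta^n_i=0$. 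The first bracket tends to $0$ because $W_i(\cdot;\alpha^*)$ is a fixed continuous test function and $\beta^n_i\to\beta^*_i$ narrowly; the second is bounded by $\|W_i(\cdot;\alpha^*)-W_i(\cdot;\alpha^n)\|_\infty\cdot(T-t_0)$, since every element of $\mathcal{U}$ has total mass $T-t_0$, and hence tends to $0$ by the uniform convergence of the gap functions. Thus $\int W_i(\cdot;\alpha^*)\,d\beta^*_i=0$, which by the reformulation gives $\operatorname{supp}(\beta^*_i)\subset\mathcal{K}_i[\alpha^*]$ for every $i$, that is, $\beta^*\in\Phi(\alpha^*)$.
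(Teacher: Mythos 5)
Your proof is correct, but it takes a genuinely different route from the paper's. The paper also begins with the uniform convergence $m[\cdot,\alpha^n]\to m[\cdot,\alpha^*]$ and $\phi[\cdot,\alpha^n]\to\phi[\cdot,\alpha^*]$ (citing continuous dependence of ODE solutions on a parameter where you redo the Gronwall estimate explicitly), but it then works directly at the level of supports: it notes that the graphs $\mathcal{K}_i$ are closed under limits (if $(t^n,u^n_i)\in\mathcal{K}_i[\alpha^n]$ converges to $(t^*,u^*_i)$, then $(t^*,u^*_i)\in\mathcal{K}_i[\alpha^*]$) and invokes the support-convergence property of narrowly convergent measures (\cite[Proposition 5.1.8]{Ambrosio}: every point of $\operatorname{supp}(\beta^*_i)$ is a limit of points of $\operatorname{supp}(\beta^n_i)$) to transfer the inclusion $\operatorname{supp}(\beta^n_i)\subset\mathcal{K}_i[\alpha^n]$ to the limit. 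You instead recast the support inclusion as the vanishing of $\int W_i\,d\beta_i$ for a nonnegative continuous gap function and pass to the limit by splitting the error into a narrow-convergence term and a term controlled by $\|W_i(\cdot;\alpha^n)-W_i(\cdot;\alpha^*)\|_\infty$ times the total mass $T-t_0$. Your route is more elementary and self-contained: it needs only the definition of narrow convergence and the fact that each $\beta^n_i$ has fixed total mass, avoiding the cited measure-theoretic result, and it is in the same spirit as the integral identities (\ref{equality:nu_H_i})--(\ref{equality:alpha_H_i}) the paper already uses in Proposition~\ref{prop:equivalence}. What the paper's argument buys, modulo the citation, is brevity and a purely geometric statement (closedness of the graphs $\mathcal{K}_i$) that does not require quantifying the convergence of the Hamiltonian defect; your uniform convergence of the gap functions is precisely the quantitative form of that closedness, so the two proofs rest on the same analytic core.
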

\begin{proof}
	It suffices to show that each probability $\beta^*_i$ is supported on $\mathcal{K}_[\alpha^*]$, where $\beta^*=(\beta^*_1,\ldots,\beta^*_d)$.
	
	Lemma \ref{lm:m_continuous} states that $m[\cdot,\alpha^n]\rightarrow m[\cdot,\alpha^*]$ in $C([t_0,T];\Sigma^d)$ as $n\rightarrow\infty$. Hence, from the definition of $\phi[\cdot,\alpha]$ (see (\ref{intro:phi_alpha}))  and continuous dependence of the solution of ODE on parameter, we conclude that $\{\phi[\cdot,\alpha^n]\}_{n=1}^\infty$ converges to $\phi[\cdot,\alpha^*]$ when $n\rightarrow\infty$. This and very definition of the set $\mathcal{K}_i[\alpha]$ (\ref{intro:graph_K_i}) yield that, if a sequence $\{(t^n,u_i^n)\}_{n=1}^\infty\subset [0,T]\times U$ converges to some $(t^*,u^*_i)$ and $(t^n,u^n_i)\in\mathcal{K}[\alpha^n]$, then 
	$(t^*,u^*)\in\mathcal{K}[\alpha^*]$. 
	
	Further, recall that $\beta^n=(\beta^n_1,\ldots,\beta^n_d)$, whilst the convergence of $\{\beta^n\}$ to $\beta^*$ means that, for each $i$ $\beta^n_i$ narrowly converges to $\beta^*_i$. Due to \cite[Proposition 5.1.8]{Ambrosio}, we have that, for every $(t^*,u_i^*)\in\operatorname{supp}(\beta^*_i)$, there exists a sequence $\{(t^n,u_i^n)\}$ that converges to $(t^*,u^*_i)$ such that $(t^n,u_i^n)\in\operatorname{supp}(\beta_i^n)$. The construction of $\Phi$ implies that $\operatorname{supp}(\beta^n_i)\subset \mathcal{K}_i[\alpha^n]$. Therefore, using the upper semicontinuity of the  $\mathcal{K}_i$ (see above), we conclude that $(t^*,u^*_i)\in\mathcal{K}_i[\alpha^*]$. This gives the conclusion of the lemma.
\end{proof}

\begin{proof}[Proof of Theorem \ref{th:existence}]
Since the multivalued mapping $\Phi$ has convex and compact values and is upper semicontinuous (see Lemma \ref{lm:Phi_upper_semi}), by the Fan–Glicksberg fixed point theorem (see \cite[Corollary 17.55]{Infinite_dimensional_analysis}) it admits a fixed point. Denote it by $\hat{\alpha}$. Let $\hat{m}(\cdot)\triangleq m[\cdot,\hat{\alpha}]$, $\hat{\phi}(\cdot)\triangleq \phi[\cdot,\hat{\alpha}]$. By Proposition \ref{prop:equivalence}, the pair $(\hat{\phi}(\cdot),\hat{m}(\cdot))$ is the solution of the finite state mean field game in the sense of Definition \ref{def:control_process}.
\end{proof}

\begin{acknowledgement} The article was prepared in the framework of a research grant funded by the Ministry of Science and Higher Education of the Russian Federation (grant ID: 075-15-2020-928).
	\end{acknowledgement}

\bibliography{finite_stat_mfg}

\end{document}